\documentclass{alggeom}

\usepackage{amssymb, amsmath, amsthm}
\usepackage[cmtip,all]{xy}

\newcommand{\Z}{\ensuremath{\mathbb{Z}}}
\newcommand{\Q}{\ensuremath{\mathbb{Q}}}
\newcommand{\C}{\ensuremath{\mathbb{C}}}
\newcommand{\PP}{\ensuremath{\mathbb{P}}}
\newcommand{\M}{\ensuremath{\overline{\mathcal{M}}}}
\newcommand{\OO}{\ensuremath{\mathcal{O}}}
\newcommand{\vir}{\ensuremath{\textrm{vir}}}
\newcommand{\vdim}{\ensuremath{\textrm{vdim}}}
\newcommand{\bt}{\ensuremath{\mathbf{t}}}
\newcommand{\one}{\ensuremath{\mathbf{1}}}
\newcommand{\bbeta}{\ensuremath{\vec{\beta}}}
\newcommand{\tw}{\ensuremath{\textrm{tw}}}
\newcommand{\GIT}{\ensuremath{\mathbin{\mkern-4mu/\mkern-6mu/\mkern-4mu}}}
\newcommand{\HH}{\ensuremath{\mathcal{H}}}
\newcommand{\G}{\ensuremath{\mathcal{G}}}
\newcommand{\te}{\widetilde\epsilon}
\newcommand{\tg}{\widetilde g}
\newcommand{\LL}{\ensuremath{\mathcal{L}}}
\DeclareMathOperator{\val}{val}
\DeclareMathOperator{\ev}{ev}
\DeclareMathOperator{\Hom}{Hom}

\newtheorem{theorem}{Theorem}[section]
\newtheorem{lemma}[theorem]{Lemma}
\newtheorem{conjecture}[theorem]{Conjecture}

\theoremstyle{definition}
\newtheorem{definition}[theorem]{Definition}

\theoremstyle{remark}
\newtheorem{remark}[theorem]{Remark}

\numberwithin{equation}{section}

\begin{document}

\title{Higher-genus quasimap wall-crossing via localization}

\author[E.~Clader]{Emily Clader}
\email{eclader@sfsu.edu}
\address{Department of Mathematics, San Francisco State University, 1600 Holloway Avenue, San Francisco, CA 94132, USA}
\thanks{The first author acknowledges the generous support of Dr.~Max R\"ossler, the Walter Haefner Foundation, and the ETH Foundation; she was also supported by NSF DMS grant 1810968 and NSF CAREER grant 2137060.  The second author was partially supported by NSF grant DMS-2239320, the CNRS and the Swiss National Science Foundation grant SNF 200021\_143274.  The third author was partially supported by NSF grant DMS 1405245 and NSF FRG grant DMS 1159265.}

\author[F.~Janda]{Felix Janda}
\email{fjanda@illinois.edu}
\address{Department of Mathematics, University of Illinois Urbana--Champaign, Urbana, IL 61801, USA}

\author[Y.~Ruan]{Yongbin Ruan}
\email{ruanyb@zju.edu.cn}
\address{Institute for Advanced Study in Mathematics, Zhejiang University, Hangzhou, China}

\classification{14N35}
\keywords{quasimap, Gromov--Witten invariant, wall-crossing, complete intersection}

\begin{abstract}
  We give a new proof of Ciocan-Fontanine and Kim's wall-crossing
  formula relating the virtual classes of the moduli spaces of
  $\epsilon$-stable quasimaps for different $\epsilon$ in any genus,
  whenever the target is a complete intersection in projective space
  and there is at least one marked point.
  Our techniques involve a twisted graph space, which
  can be modified to yield wall-crossing formulas for more general gauged linear
  sigma models.
\end{abstract}

\maketitle

\section{Introduction}

The study of quasimaps was introduced into Gromov--Witten theory several years ago by Ciocan-Fontanine, Kim, and Maulik \cite{CFKM, CFKModuli, MM, Toda}, generalizing the notion of stable maps to a GIT quotient $Z$.  Quasimaps depend on the additional datum of a stability parameter $\epsilon$ varying over positive rational numbers.  When $\epsilon \rightarrow \infty$, they coincide with ordinary stable maps and one recovers the usual Gromov--Witten theory of $Z$, while when $\epsilon \rightarrow 0$, quasimaps are the stable quotients defined by Marian--Oprea--Pandharipande \cite{MOP}.  The latter theory is thought to correspond to the mirror B-model of $Z$ \cite{CZ, CFKBigI, CFKZero}.

When $\epsilon$ varies from $\infty$ to $0$, the theory changes only at certain discrete values, giving a wall-and-chamber structure to the space of stability parameters.  In \cite{CFKZero}, Ciocan-Fontanine and Kim proved a wall-crossing formula relating the genus-zero theories for different values of $\epsilon$.  The higher-genus theory, on the other hand, is well-known to be much more difficult.  Even stating the appropriate generalization of the wall-crossing formula to higher genus is a nontrivial problem.  Ciocan-Fontanine and Kim carried this out in \cite{CFKMirror} and \cite{CFKHigher}, yielding the following remarkable conjecture:

\begin{conjecture}[See \cite{CFKMirror}]
  \label{MainConj}
  Let $Z$ be a complete intersection in projective space, and fix
  $g, n \geq 0$.
  Then
  \begin{align*}
    &\sum_{\beta} q^\beta [\M^{\epsilon}_{g, n}(Z,\beta)]^{\vir} = \\
&\sum_{\beta_0, \beta_1, \ldots, \beta_k} \frac{q^{\beta_0}}{k!} b_{\bbeta*}c_* \left(\prod_{i = 1}^k q^{\beta_i} \ev_{n + i}^*(\mu_{\beta_i}^\epsilon(-\psi_{n + i})) \cap [\M^\infty_{g, n + k}(Z,\beta_0)]^{\vir} \right),
  \end{align*}
  where $\mu^{\epsilon}_{\beta}(z)$ are certain coefficients of the
  $I$-function of $Z$, $b_{\bbeta}$ is a morphism that converts marked
  points to basepoints, and $c$ is the natural contraction morphism
  from $\infty$-stable to $\epsilon$-stable quasimaps.

More generally, let $Z$ be a GIT quotient of the form $W \GIT_{\theta} G$, for $W$ a complex affine variety and $G$ a reductive algebraic group.  Then there is an explicit formula, depending only on coefficients of the $I$-function of $Z$, that relates the virtual fundamental cycles of the moduli spaces of $\epsilon$-stable and $\infty$-stable quasimaps to $Z$.
\end{conjecture}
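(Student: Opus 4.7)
The plan is to prove the wall-crossing formula by equivariant localization on an auxiliary moduli space, which I will call a \emph{twisted graph space}. The rough idea, following the philosophy of Givental's graph construction, is to replace quasimaps to $Z$ by $\epsilon$-stable quasimaps from a genus-$g$ curve with $n$ marked points to $Z\times\P^1$, equipped with degree $(\beta,1)$ in the second factor. The ``twist'' refers to coupling this construction to the cosection/Euler-class data that encodes the complete intersection $Z\subset\P^r$, so that the resulting virtual class on the graph space restricts correctly to $[\M^\epsilon_{g,n}(Z,\beta)]^\vir$ on the natural sublocus. A torus $\C^*$ acts by rotating the $\P^1$ factor, and the virtual class admits an equivariant lift. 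Integrating $1$ against this lift and comparing fixed-point contributions will yield the identity.

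First I would construct the twisted graph space rigorously and verify that it carries a $\C^*$-equivariant perfect obstruction theory, so that virtual localization in the sense of Graber--Pandharipande applies. Next I would classify the $\C^*$-fixed loci: a fixed quasimap must have its image in $\P^1$ concentrated over $0$ and $\infty$, so the domain curve splits into a ``main'' component mapped over $0$, a ``main'' component mapped over $\infty$, and chains/tails of rational components mapping isomorphically to $\P^1$. On the $0$-side, the $\epsilon$-stability plus the fact that the $\P^1$-degree vanishes there forces an $\epsilon$-stable quasimap to $Z$; on the $\infty$-side, $\epsilon$-stability together with one unit of $\P^1$-degree at that point degenerates to the $\infty$-stable condition, with the node from an attached rational tail acting as a basepoint of some class $\beta_i$. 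Crucially, the $k$ rational tails are unordered, yielding the combinatorial $\frac{1}{k!}$.

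The third step is the residue calculation. By design, the $\C^*$-equivariant class we push forward is proportional to $[\text{pt}_0]/z$, so for the identity to hold it suffices to match $z$-coefficients after inverting the virtual normal bundle. The contribution of a fixed locus with $k$ rational tails of degrees $\beta_1,\dots,\beta_k$ is the cap product of $[\M^\infty_{g,n+k}(Z,\beta_0)]^\vir$ with insertions at the extra marked points of the $\P^1$-tail contribution, and a standard local computation identifies this insertion as $\ev_{n+i}^*(\mu^\epsilon_{\beta_i}(-\psi_{n+i}))$: the series $\mu^\epsilon_\beta(z)$ is precisely the generating function for genus-$0$, one-pointed $\epsilon$-stable quasimap invariants with a basepoint of total degree $\beta$ at one end, which is exactly the $I$-function coefficient. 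Matching this against the morphisms $b_{\bbeta}$ (converting the new marked points to basepoints on the $\epsilon$-stable side) and $c$ (contracting $\infty$-unstable components) produces the right-hand side, while the $\C^*$-fixed locus of $\epsilon$-stable maps \emph{entirely} over $0$ gives the left-hand side.

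The main obstacle will be (i) defining the twisted graph space so that its equivariant virtual class restricts correctly on every fixed locus, in particular identifying the $\infty$-side fixed loci with $\M^\infty_{g,n+k}(Z,\beta_0)$ together with the prescribed basepoint-creation map $b_{\bbeta}$, and (ii) computing the $\P^1$-tail residues to identify them \emph{on the nose} with the $I$-function coefficients $\mu^\epsilon_\beta(z)$. The hypothesis that there is at least one marked point is what allows the entire class to be reconstructed from the $[\text{pt}_0]/z$-coefficient in the equivariant pushforward, since it ensures the non-equivariant limit of the relevant integral is well-defined. Once these two ingredients are in place, the formula in Conjecture \ref{MainConj} follows by collecting fixed-point contributions and bookkeeping the automorphism factors of the unordered rational tails.
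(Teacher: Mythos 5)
Your proposal correctly identifies the overall framework — an auxiliary ``twisted graph space'' with a $\C^*$-action, whose fixed loci encode both $\epsilon$-stable and $\infty$-stable quasimap data, followed by virtual localization — and this is indeed the skeleton of the paper's argument. However, the heart of the proof is missing, and one of your constructions would not work as stated.

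First, the target. You propose quasimaps to $Z\times\P^1$ with bidegree $(\beta,1)$. The paper instead uses quasimaps of arbitrary $\P^1$-degree $d$ to the nontrivial $\P^1$-bundle $\P(\O_Z(-1)\oplus\O_Z)$ over $Z$, realized as a GIT quotient by $(\C^*)^2$ with a mixed action on the extra $\C^2$. This twist is not cosmetic: it is precisely what makes the stability condition on the two sides of the $\P^1$ come out as $\epsilon$-stability versus $\infty$-stability. Concretely, the nondegeneracy condition forces $\operatorname{ord}_q(\vec x)\le 1/\epsilon$ where $z_2\neq 0$ and $\operatorname{ord}_q(\vec x)=0$ where $z_2=0$, so curves over $\infty$ carry no basepoints at all (hence $\infty$-stable data), while curves over $0$ carry $\epsilon$-stable data. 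With the trivial bundle $Z\times\P^1$ there is no mechanism to change the stability parameter from one fixed point to the other, and both sides would be $\epsilon$-stable. The paper also passes through a \emph{twisted} theory (a Euler-class twist by $R\pi_*(\L^\vee)$) and deduces the untwisted statement via Lemma~\ref{twuntw}; your proposal does not account for this layer.

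Second, and more importantly, the claim that the formula ``follows by collecting fixed-point contributions'' once the fixed loci are classified is not correct. Localization does not directly compute the difference of the two sides of the wall-crossing formula. Instead the paper forms the \emph{difference} $D_\beta(\lambda,y)$ between two localization expressions (\eqref{4} and \eqref{5}), shows by induction on $\beta$, $g$, $n$ and the genus-zero wall-crossing that all graphs with more than one positive-degree vertex cancel, reduces the remaining contributions to the $\P^1$ tree series $\widetilde S_0$ and $\epsilon'$ computed in Section~\ref{P1}, and then invokes a rationality argument: by construction $D_\beta(\lambda,y)$ is a Laurent polynomial in $\lambda$, but switching the insertion $\alpha$ from $\one$ to $H$ multiplies the top-dimensional term by $\frac{\lambda}{2}+\frac{\lambda}{2}\sqrt{1+4y/\lambda^2}$ (identity~\eqref{fi0}), which expands to a Laurent series in $\lambda$ with infinitely many negative powers. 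Consistency forces the contribution to vanish, and the wall-crossing identity drops out. This irrationality trick is the engine of the proof, and it is absent from your proposal. Correspondingly, the hypothesis $n\geq 1$ is needed not to ``take a non-equivariant limit'' but to have an insertion $\alpha$ available to vary (Remark~\ref{rem:mp}); with no marked points one cannot trigger the irrationality argument.

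In short: the ansatz (twisted graph space plus localization) is right in spirit, but you would need to (i) replace the trivial $\P^1$-factor by the nontrivial $\P^1$-bundle with the correct $(\C^*)^2$-weights so the two fixed points genuinely see $\epsilon$- and $\infty$-stability, (ii) set up the twisted theory and the reduction Lemma~\ref{twuntw}, (iii) carry out the $\P^1$ tree series computation of Section~\ref{P1}, and (iv) most crucially, replace the direct ``match fixed-point contributions'' plan with the induction plus Laurent-polynomial/irrationality argument.
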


Ciocan-Fontanine and Kim have proven their conjecture whenever $Z$ is a complete intersection in projective space \cite{CFKMirror}, using virtual push-forward techniques and MacPherson's graph construction.

Quasimap theory is also a special case of the gauged linear sigma model (GLSM), which was recently given a mathematical definition by Fan, Jarvis, and the third author \cite{FJRGLSM}.  More specifically,
for complete intersections $Z$ in projective space, the GLSM has a ``geometric" chamber, which recovers quasimap theory, and a ``Landau--Ginzburg" chamber, which recovers Fan--Jarvis--Ruan--Witten (FJRW) theory when $Z$ is a hypersurface.  Conjecture \ref{MainConj} makes sense in both chambers, so a natural question is whether the same wall-crossing results hold on the Landau--Ginzburg side.

Our primary motivation for studying wall-crossing, in the context both of quasimaps and the Landau--Ginzburg model, is the celebrated Landau--Ginzburg/Calabi--Yau (LG/CY) correspondence.  When $Z$ is a hypersurface, the LG/CY correspondence proposes an explicit connection between Gromov--Witten and FJRW theory, while for more general targets, it can be viewed as a wall-crossing (or ``phase transition") between the different chambers of the GLSM.  In contrast to the wall-crossing expressed by Conjecture \ref{MainConj}, phase transition between chambers involves analytic continuation of generating functions, a much more subtle operation.  Our ultimate goal is to prove the LG/CY correspondence via a series of wall-crossings, first from $\epsilon = \infty$ to $\epsilon = 0$ in the geometric chamber, then across the chamber wall, and finally from $\epsilon = 0$ to $\epsilon = \infty$ in the Landau--Ginzburg chamber.  This strategy has already been carried out in genus zero for hypersurfaces (through a combination of Ciocan-Fontanine--Kim's wall-crossing \cite{CFKZero}, work of Ross and the third author on the Landau--Ginzburg side \cite{RR}, and analytic continuation by Chiodo and the third author \cite{CR}), as well as in genus one for the quintic hypersurface without marked points (by combining the quasimap mirror theorem of Kim--Lho \cite{KimLho} with the work of Guo--Ross \cite{GR2, GR1}).

In the companion paper \cite{CJRLG} to this one, we prove that Conjecture \ref{MainConj} holds in both phases of the GLSM, assuming the existence of at least one marked point.\footnote{Our proof crucially uses that the localization expression for the virtual cycle of the twisted graph space changes in a nontrivial way when the insertions vary, which is why we require the existence of at least one marked point.  See Remark \ref{rem:mp} for further discussion.}  Our techniques are entirely different from Ciocan-Fontanine--Kim's strategy in the geometric phase: we construct a larger moduli space (the ``twisted graph space") with a $\C^*$-action, in which the theories at $\epsilon=\infty$ and at arbitrary $\epsilon$ arise as fixed loci.  This larger moduli space is closely related to the space of mixed-spin $p$-fields considered by Chang--Li--Li--Liu \cite{MSP1, MSP2}; indeed, the two spaces represent different chambers of the same GIT quotient.

The proof of \cite{CJRLG} simplifies considerably in the geometric chamber, due to the absence of certain technical issues regarding the decomposition of cosection-localized virtual cycles.  The result, in this case, is a new proof of Ciocan-Fontanine--Kim's wall-crossing theorem for complete intersections in projective space:

\begin{theorem}
\label{thm:1}
(See Theorem \ref{thm:main})
Conjecture \ref{MainConj} holds whenever $Z$ is a complete intersection in projective space and $n \geq 1$.
\end{theorem}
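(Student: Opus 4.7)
The plan is to prove the wall-crossing identity by applying virtual $\C^*$-localization on a larger moduli space that simultaneously sees both sides of the equation. Following the strategy outlined in the introduction, I would construct a \emph{twisted graph space}: a GIT quotient parameterizing quasimap-like data to $Z$ together with an auxiliary $\P^1$-factor, endowed with a natural $\C^*$-action scaling $\P^1$. The stability condition must be engineered so that among the $\C^*$-fixed loci, two are distinguished: one recovering $\M^{\epsilon}_{g,n}(Z,\beta)$ with the $n$ marked points constrained to lie over $0 \in \P^1$, and another recovering $\M^{\infty}_{g,n+k}(Z,\beta_0)$ together with $k$ ``decorated rational tails'' over $\infty \in \P^1$, one attached at each of the extra marked points $n+1,\ldots,n+k$ and carrying quasimap data of degrees $\beta_1,\ldots,\beta_k$.

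With the twisted graph space in hand, the first step is to equip it with a perfect obstruction theory whose restriction to each distinguished fixed locus reproduces the obstruction theory used to define the corresponding virtual cycle $[\M^{\epsilon}_{g,n}(Z,\beta)]^{\vir}$ or $[\M^{\infty}_{g,n+k}(Z,\beta_0)]^{\vir}$. Graber--Pandharipande virtual localization then expresses an equivariant virtual integrand on the twisted graph space as a sum of contributions from the fixed loci. The second step is to compute each contribution: for the $\epsilon$-fixed locus the answer is, up to equivariant factors, the left-hand side of the conjecture, while for the $\infty$-fixed locus the edge, vertex, and normal-bundle factors carried by the tails should recombine into the coefficients $\mu^{\epsilon}_{\beta_i}(-\psi_{n+i})$ of the $I$-function evaluated against $\psi$-classes on the main component, with the gluing of tails into the main component producing the composite morphism $b_{\bbeta}\circ c$ appearing in the statement.

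The third step is to equate the two expressions and extract the desired identity of non-equivariant virtual classes by comparing coefficients of the equivariant parameter. Because $n \geq 1$, the insertion carried by a marked point lying over $0 \in \P^1$ makes the localization identity depend on the equivariant parameter in a genuinely nontrivial way; allowing the insertion to vary then separates the two distinguished fixed-locus contributions and yields the wall-crossing formula. Without any marked point, the localization identity becomes trivially balanced and this mechanism fails, which matches the footnote in the introduction.

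The main obstacle I anticipate is the second step: identifying the tail contributions with the closed-form coefficients $\mu^{\epsilon}_{\beta_i}$ of the $I$-function, and ensuring that no additional ``intermediate'' fixed loci spoil the comparison. This will require a careful choice of the twisted stability condition so that only the two distinguished loci contribute nontrivially, together with meticulous bookkeeping of automorphism, edge, and node factors and explicit use of the $I$-function formula for a projective complete intersection. A related subtlety is compatibility of the virtual class on the twisted graph space with the virtual classes $[\M^{\epsilon}]^{\vir}$ and $[\M^{\infty}]^{\vir}$ under restriction to the respective fixed loci, which is what allows the comparison to take place at the level of virtual cycles rather than only numerically.
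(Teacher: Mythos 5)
Your high-level strategy (twisted graph space, $\C^*$-localization, and exploiting the marked point) matches the paper, but the middle and end of your argument diverge from what actually works, and the divergence is a genuine gap rather than a cosmetic one.

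First, the fixed loci of the twisted graph space $\P Z^{\epsilon}_{g,n,\beta,d}$ are not two distinguished loci but a whole family indexed by decorated graphs: each graph mixes stable vertices over $0$ carrying $\epsilon$-stable quasimap data with stable vertices over $\infty$ carrying $\infty$-stable data, connected by degree-$d(e)$ edges. There is no ``careful choice of twisted stability condition'' that suppresses the intermediate graphs; they genuinely contribute. The paper deals with them by a double induction. One compares two localization computations --- one for $\sum q^\beta y^d p_*(\ev_1^*\alpha \cap [\P Z^\epsilon_{g,n,\beta,d}]^{\vir})$ and one for the analogous pushforward of the conjectured right-hand side --- and shows that graphs with two or more positive-degree vertices contribute equally to both, using induction on $\beta$ (for stable $j(v)=0$ vertices) and the already-known genus-zero $J$-function wall-crossing \eqref{JWC} (for unstable $j(v)=0$ vertices). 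This reduces the difference $D_\beta(\lambda,y)$ to graphs with a single positive-degree vertex $v_0$, where the remaining trees are degree-zero and can be matched to the $\P^1$ contributions analyzed in Section~\ref{P1}.

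Second, and more seriously, ``extract the identity by comparing coefficients of the equivariant parameter'' is not the mechanism. The paper's closing argument is an \emph{irrationality} argument: $D_\beta(\lambda,y)$ is a priori a Laurent polynomial in $\lambda$, but swapping the insertion $\alpha$ at the first marked point from $\one$ to $H$ multiplies the leading-dimension part of $D_\beta$ by $\frac{\lambda}{2}+\frac{\lambda}{2}\sqrt{1+4y/\lambda^2}$ (coming from $\widetilde S_0(H,0)/\widetilde S_0(\one,0)$ in Lemma~\ref{lem:P1}), which has infinitely many negative powers of $\lambda$ when expanded. This forces that leading part to vanish, and iterating over the homogeneous pieces $U_{l,k}$ finishes the induction. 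Your proposal gestures in this direction (``allowing the insertion to vary... separates the two distinguished fixed-locus contributions'') but mischaracterizes why the variation helps: it is not a separation of two contributions but a constraint of rationality that kills a whole tower of terms. Without the $\P^1$ computation of $\widetilde S_0(\one,0)$ and $\widetilde S_0(H,0)$ (Section~\ref{P1}) and the induction scheme, the argument as you have sketched it would stall at exactly the point you flag as your ``main obstacle.'' You also omit the intermediate step of proving a \emph{twisted} wall-crossing (Theorem~\ref{thm:twisted}) and deducing the untwisted statement from it (Lemma~\ref{twuntw}); the twist by $e_{\C^*_\lambda}(R\pi_*\L^\vee\otimes\C_{(\lambda)})$ is what makes the vertex contributions over $0$ come out as $[\M^\epsilon]^{\vir}_{\tw}$ rather than the untwisted class, so it is not optional.
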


\noindent The goal of the present paper is to present the proof of Theorem~\ref{thm:1}, in order to illustrate the techniques of \cite{CJRLG} in the simpler and more familiar geometric setting.

\subsection{Related work} Since the completion of this work, Yang Zhou \cite{YangZhou} has given a proof of Conjecture~\ref{MainConj} for any GIT quotient $Z = W \GIT_{\theta} G$ in which $W$ is a complex affine variety with at worst lci singularities, $G$ is a reductive algebraic group acting on $W$, and $\theta$ is a character of $G$ such that $W^{\text{s}}(\theta) = W^{\text{ss}}(\theta)$ is smooth and nonempty; this includes the case of complete intersections in projective space considered here, but notably, it does not require the assumption that $n \geq 1$.  At approximately the same time, Jun Wang also proved the genus-zero wall-crossing formula for hypersurfaces in toric stacks \cite{JunWang} using a twisted graph space inspired by the one used in this paper.

\subsection{Plan of the paper}  In Section \ref{sec:defs}, we review the necessary definitions from quasimap theory and state Theorem~\ref{thm:1} in precise form.  We introduce the twisted graph space in Section \ref{sec:PZ}, and we explicitly compute the contributions to the localization formula from each of its $\C^*$-fixed loci.  Specializing to the case where $Z$ is a point, the twisted graph space is simply $\M_{g,n}(\PP^1,d)$, and in Section \ref{P1}, we study that case in detail.  Finally, in Section \ref{sec:proof}, we turn to the proof of Theorem~\ref{thm:1}.  The idea is to compute, via localization, the pushforward from the twisted graph space to $\M_{g,n}^{\epsilon}(Z,\beta)$ of a certain difference of cohomology classes.  Using the computations of the previous section (which are closely related to the contribution to the localization from degree-zero components), we conclude that this difference changes by an irrational function of the equivariant parameter when an insertion is varied.  This implies that the difference must vanish, and the wall-crossing theorem follows.

\subsection{Acknowledgments}  The notion of a twisted graph space containing $\epsilon$-theory and $\infty$-theory as fixed loci owes an intellectual debt to a number of people.  In particular, the idea that such a space should exist was first mentioned to us by Ionu\c{t} Ciocan-Fontanine, and the space that we define below is related by phase transition to Chang--Li--Li--Liu's moduli space of mixed-spin $p$-fields \cite{MSP1, MSP2}.  We thank all of these authors for their crucial help in inspiring our construction.  We are also grateful to Bumsig Kim and Dustin Ross for many useful conversations and comments.

\section{Definitions and setup}
\label{sec:defs}

Fix a collection of homogeneous polynomials
\begin{equation*}
  W_1, \ldots, W_r \in \C[x_1, \ldots, x_{N+1}]
\end{equation*}
of degrees $d_1, \ldots, d_r$ defining a complete intersection
$Z \subset \PP^N$.
In this section, we review the relevant definitions regarding 
quasimaps to $Z$.
All of these ideas are due to Ciocan-Fontanine, Kim, and Maulik, and we refer the reader to their work for details.

\subsection{Quasimaps and their moduli}

A moduli space of quasimaps depends on the presentation of $Z$ as a
GIT quotient.
In our case, we write
\begin{equation*}
  Z = [AZ \GIT_{\theta} \C^*],
\end{equation*}
where $AZ$ denotes the affine cone over $Z$:
\begin{equation*}
  AZ = \{W_1 = \cdots = W_r = 0\} \subset \C^{N+1}.
\end{equation*}
The $\C^*$-action on $AZ$ is induced by the $\C^*$-action on
$\C^{N + 1}$ with weights $(1, \dotsc, 1)$, and the character
$\theta \in \Hom_{\Z}(\C^*, \C^*) \cong \Z$ is positive.
We use $(x_1, \dotsc, x_{N+1})$ to denote the coordinates on
$\C^{N+1}$.

For each genus $g \geq 0$, nonnegative integer
$n$, and positive rational number $\epsilon$, quasimaps are defined as follows:

\begin{definition}
\label{def:quasimap}
An \emph{$\epsilon$-stable quasimap} to $Z$ consists of an $n$-pointed prestable curve $(C; q_1, \ldots, q_n)$ of genus $g$, a line bundle $L$ on $C$, and a section
\begin{equation*}
\vec{x}  = (x_1, \ldots, x_{N+1}) \in \Gamma(L^{\oplus N+1}),
\end{equation*}
subject to the following conditions:
\begin{itemize}
\item {\it Image}: The sections $\vec{x} = (x_1, \ldots, x_{N+1})$ satisfy the equations
\begin{equation}
  \label{image}
  W_1(\vec{x}) = 0 \in \Gamma(L^{\otimes d_1}),\ \dotsc,\ W_r(\vec{x}) = 0 \in \Gamma(L^{\otimes d_r}).
\end{equation}
\item {\it Nondegeneracy}:  The zero set of $\vec{x}$ is finite and disjoint from the marked points and nodes of $C$, and for each zero $q$ of $\vec{x}$, the order of the zero (that is, the common order of vanishing of $x_1, \ldots, x_{N+1}$) satisfies
\begin{equation}
\label{nondegen}
\text{ord}_q(\vec{x}) \leq \frac{1}{\epsilon}.
\end{equation}
(Zeroes of $\vec{x}$ are referred to as \emph{basepoints} of the
quasimap.)
\item {\it Stability}:  The $\Q$-line bundle
\begin{equation}
\label{stab}
L^{\otimes \epsilon} \otimes \omega_{\log}
\end{equation}
is ample.
\end{itemize}
The {\it degree} of the quasimap is defined as $\beta:=\deg(L)$.
\end{definition}

Note that for $\epsilon > 1$, condition \eqref{nondegen} implies that $x_1, \ldots, x_{N+1}$ have no common zeroes, so $\vec{x}$ defines a map $C \rightarrow \PP^{N}$.
Furthermore, for $\epsilon > 2$, Condition \eqref{stab} says that this map is stable, and condition \eqref{image} says that it lands in the complete intersection $Z$.
Thus, the definition of $\epsilon$-stable quasimap recovers the notion of a stable map to $Z$ in this case.
The degree $\beta$ of the quasimap corresponds to the degree of the composition $C \to Z \to \PP^N$.
To emphasize their role in the more general theory, we refer to stable maps as $\infty$-stable quasimaps in what follows.

For $\epsilon \leq \frac{1}{\beta}$, on the other hand, condition \eqref{nondegen} puts no restriction on the orders of the basepoints, and \eqref{stab} is equivalent to imposing the analogous requirement for all $\epsilon > 0$.  The resulting objects are stable quotients \cite{MOP}, which are sometimes referred to as $(0+)$-stable quasimaps.

\begin{remark}
  \label{rem:rational}
  An alternative way to view the choice of stability parameter
  $\epsilon$ is to replace the choice of character
  $\theta \in \Hom_{Z}(\C^*, \C^*)$ in the GIT quotient by a choice of
  rational character---that is,
  $\theta \in \Hom_{\Z}(\C^*, \C^*) \otimes_{\Z} \Q \cong \Q$.
  This perspective is useful in what follows.
\end{remark}

The first key foundational result about quasimaps is the following:

\begin{theorem}[Ciocan-Fontanine--Kim--Maulik \cite{CFKM}]
  There is a proper, separated Deligne--Mumford moduli stack
  $\M^{\epsilon}_{g,n}(Z,\beta)$ parameterizing genus-$g$,
  $n$-pointed, $\epsilon$-stable quasimaps of degree $\beta$ to $Z$ up
  to isomorphism.
\end{theorem}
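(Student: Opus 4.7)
The plan is to construct $\M^{\epsilon}_{g,n}(Z,\beta)$ as an open substack of a natural algebraic stack parametrizing triples $(C, L, \vec{x})$ satisfying the image condition, to verify the Deligne--Mumford property via finiteness of automorphisms, and to establish properness through the valuative criterion combined with a semistable reduction argument.

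First I would construct the ambient algebraic stack: start from the Artin stack $\mathfrak{M}_{g,n}$ of $n$-pointed prestable curves of genus $g$, pass to the relative Picard stack $\mathrm{Pic}^{\beta}$ of degree-$\beta$ line bundles, and then form the relative linear stack of sections of $L^{\oplus N+1}$. The image equations \eqref{image} cut out a closed substack of this. The nondegeneracy condition \eqref{nondegen}---finiteness of the zero locus, disjointness from markings and nodes, and the bound on the order of vanishing---is an open condition in families, and ampleness of $L^{\otimes\epsilon}\otimes\omega_{\log}$ is also open. Thus $\M^{\epsilon}_{g,n}(Z,\beta)$ appears as an open substack of an algebraic stack, and a standard boundedness argument (controlling the number of basepoints and their orders in terms of $\beta$ and $\epsilon$, then invoking boundedness of the curves and line bundles of fixed degree) shows it is of finite type.

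To verify the Deligne--Mumford property, I would check that every object has a finite automorphism group. An automorphism consists of an automorphism of $C$ fixing the marked points together with a compatible isomorphism of $L$ preserving $\vec{x}$. Ampleness of $L^{\otimes\epsilon}\otimes\omega_{\log}$ rules out the usual sources of positive-dimensional automorphism groups, because on any irreducible component it forces the presence of enough special points---marked points, nodes, or basepoints---to rigidify that component. Compatibility with $\vec{x}$ then leaves only the scalar action on $L$, which is finite after accounting for a nonzero section.

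The main obstacle is properness, which I would attack via the valuative criterion over a DVR $R$ with fraction field $K$. Separatedness follows from the fact that two extensions must agree on the generic fiber, and the stability condition then forces them to coincide after contracting any components that violate ampleness. For existence, one first applies semistable reduction, possibly after a finite base change of $R$, to extend the generic curve $C_K$ to a prestable family, and extends $L_K$ using properness of the relative Picard stack; the section $\vec{x}_K$ then extends over the generic locus of the central fiber by normality. The resulting central fiber will typically fail stability or nondegeneracy, and the heart of the argument---following Ciocan-Fontanine, Kim, and Maulik---is a sequence of birational modifications of the family: blow up along loci where $\vec{x}$ vanishes to order exceeding $1/\epsilon$ in order to redistribute the excess vanishing onto exceptional components, and contract components on which $L^{\otimes\epsilon}\otimes\omega_{\log}$ fails to be ample. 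The delicate step, which I expect to be the main technical obstacle, is proving that this procedure terminates and yields a genuinely $\epsilon$-stable object; this requires an induction on an appropriate invariant, such as total basepoint order plus number of unstable components, together with a careful verification that each modification strictly decreases that invariant.
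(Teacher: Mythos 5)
The paper does not prove this statement; it is stated as a foundational result and cited directly from Ciocan-Fontanine--Kim--Maulik \cite{CFKM}, so there is no proof in the present text against which to compare your argument. Your sketch does outline, at a high level, roughly the strategy one would pursue (and that \cite{CFKM} does pursue): exhibit the moduli problem as an open substack of an algebraic stack built over $\mathfrak{M}_{g,n}$ and the relative Picard stack, check finiteness of automorphisms for the Deligne--Mumford property, and prove properness via the valuative criterion.

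However, as written your proposal has a genuine gap that you yourself flag but do not close: the termination and well-definedness of the blow-up/contraction procedure in the existence half of the valuative criterion. This is precisely the technical heart of the theorem, and announcing that one would ``induct on an appropriate invariant'' without identifying that invariant, or proving that it strictly decreases and that the modifications preserve the image equations and the degree, is not a proof --- it is a restatement of the difficulty. Two further points deserve care. First, the relative Picard stack of a family of prestable curves is not proper; extending $L_K$ across the central fiber requires an argument (e.g.\ extending across the smooth locus and then across nodes after base change), not a blanket appeal to properness of $\mathrm{Pic}$. Second, for separatedness, ``contracting components that violate ampleness'' must be shown to produce \emph{isomorphic} limits, and one must rule out the possibility of two non-isomorphic $\epsilon$-stable limits arising from different sequences of contractions; this uses the uniqueness built into the stability condition and is not automatic. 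If you wish to flesh this out, the arguments in \cite{MOP} (which reduce properness to known properness of Quot schemes) and their adaptation in \cite{CFKM} are the place to look, and they are structured rather differently from the naive ``modify until stable'' scheme you describe.
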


Moreover, in \cite[Section 4.5]{CFKM}, Ciocan-Fontanine, Kim, and Maulik exhibit
a perfect obstruction theory on $\M^{\epsilon}_{g,n}(Z,\beta)$
relative to the stack $\mathcal{D}_{g,n,\beta}$ of curves
equipped with a line bundle of degree $\beta$.
This can be used to define a virtual cycle
\begin{equation*}
[\M^{\epsilon}_{g,n}(Z,\beta)]^{\vir} \in A_{\vdim}(\M^{\epsilon}_{g,n}(Z,\beta)),
\end{equation*}
where
\begin{equation*}
\vdim(\M^{\epsilon}_{g,n}(Z,\beta)) = (N-r-3)(1-g) + \beta (N + 1 - \sum_{i = 1}^r d_i) + n.
\end{equation*}
Here, we recall that $Z$ is a complete intersection of degrees $d_1, \dotsc, d_r$ in $\PP^N$.

\begin{remark}
  \label{rem:genuszero}
  In genus zero, the definition of the virtual cycle simplifies
  substantially.
  Indeed, the genus-zero virtual cycle is simply the Euler class of a
  vector bundle; this is exactly analogous to the situation for stable
  maps.
\end{remark}

There are evaluation maps
\begin{equation*}
\ev_i\colon \M^{\epsilon}_{g,n}(Z,\beta) \rightarrow Z
\end{equation*}
for each $i=1, \ldots, n$, since the basepoints of $\vec{x}$ do not occur at marked points.  Furthermore, psi classes $\psi_1, \ldots, \psi_n$ can be defined just as in the usual theory of stable maps.  Using these, one can define quasimap correlators by analogy to the usual Gromov--Witten invariants.  We denote
\begin{equation*}
\mathcal{H} := H^*(Z;\C),
\end{equation*}
which is the state space of the theory (that is, the vector space from which insertions to the correlators are drawn), and
\begin{equation*}
H:= c_1(\OO_Z(1)),
\end{equation*}
the hyperplane class.

\subsection{The $J$-function}
\label{J}

The small $J$-function for $\epsilon$-stable quasimap theory was defined by Ciocan-Fontanine and Kim in \cite{CFKZero}, generalizing its original definition by Givental \cite{GiventalElliptic, GiventalMirror, GiventalEquivariant} in Gromov--Witten theory and building on the interpretation in terms of contraction maps due to Bertram \cite{Bertram}.  We recall their definition below.

Let $\G \M^{\epsilon}_{0,k}(Z,\beta)$ denote a ``graph space'' version
of $\M^{\epsilon}_{0,k}(Z,\beta)$.
That is, $\G\M^{\epsilon}_{0,k}(Z,\beta)$ parameterizes the same data
as the original moduli space, together with the additional datum of a
degree-$1$ map $C \rightarrow \PP^1$ (or equivalently, a
parametrization of one component of $C$), and the ampleness condition
\eqref{stab} is not required on the parameterized component.
Denote the parameterized component by $C_0 \cong \PP^1$, with
coordinates $[z_1:z_2]$.
Let $\C^*$ act on $\G \M^{\epsilon}_{0,k}(Z,\beta)$ by multiplication
on $z_2$, and let $z$ denote the weight at the tangent space of
$0 = [0 : 1] \in \PP^1$.

The fixed loci of this action consist of quasimaps for which all of marked points and basepoints and the entire degree $\beta$ is concentrated over $0$ and $\infty$ in $C_0$.  We denote by $F^{\epsilon}_{\beta} \subset \G \M^{\epsilon}_{0,k}(Z,\beta)$ the fixed locus on which everything is concentrated over $0$.  More precisely, when $k\geq 1$ or $\beta > 1/\epsilon$, an element of $F^{\epsilon}_{\beta}$ consists of an $\epsilon$-stable quasimap to $Z$ attached at a single marked point to $C_0$, so
\begin{equation*}
F^{\epsilon}_{\beta} \cong \M^{\epsilon}_{0,k+1}(Z,\beta).
\end{equation*}
When $k=0$ and $\beta \leq 1/\epsilon$, on the other hand, such a quasimap would not be stable; instead, $C_0$ is the entire source curve, and the quasimap has a single basepoint of order $\beta$ at $0$.  In either case, there is an evaluation map
\begin{equation*}
\ev_{\bullet}\colon F^{\epsilon}_{\beta} \rightarrow Z,
\end{equation*}
defined by evaluation at $\infty \in C_0$.

Following \cite{CFKZero}, we define:

\begin{definition}
  Let $q$ be a formal Novikov variable. 
  The {\it $\epsilon$-stable $J$-function} is defined by
  \begin{equation*}
    J^{\epsilon}(q,\bt,z):= z\sum_{k \geq 0, \beta \geq 0} q^{\beta} (\ev_{\bullet})_* \left( \frac{\prod_{i=1}^k \ev_i^*(\bt(\psi_i))}{k!}  \cap \frac{[F_{\beta}^{\epsilon}]^{\vir}}{e_{\C^*}\left(N^{\vir}_{F_{\beta}^{\epsilon}/\G \M^{\epsilon}_{0,k}(Z,\beta)}\right)} \right),
  \end{equation*}
  where $\bt \in \HH[\![z]\!]$.
  (Note that our convention differs by an overall factor of $z$ from
  \cite{CFKZero}.)
\end{definition}

The {\it small $\epsilon$-stable $J$-function} is
\begin{equation*}
J^{\epsilon}(q,\bt = 0, z) =: J^{\epsilon}(q,z).
\end{equation*}
More explicitly, we have
\begin{equation*}
  J^{\epsilon}(q,z) = z + \sum_{\beta > 1/\epsilon} q^{\beta}  (\ev_{\bullet})_*\left( \frac{[\M^{\epsilon}_{0,\bullet}(Z,\beta)]^{\vir}}{z-\psi_{\bullet}}\right)+\text{unstable terms}.
\end{equation*}
The ``unstable terms" are the terms with $\beta \leq 1/\epsilon$, and can be computed explicitly, as explained in \cite{Bertram}.  In particular, taking $\epsilon \rightarrow 0+$ (that is, requiring the stability condition \eqref{stab} for all $\epsilon >0$), every term of the $J$-function becomes unstable, and one obtains a function $I(q,z) = J^{0+}(q,z)$ that can be calculated exactly.  This is the $I$-function of $Z$, as studied by Givental \cite{GiventalMirror} and many others:
\begin{equation}
\label{I}
I(q,z) = z\sum_{\beta \geq 0} q^{\beta} \frac{\prod_{i=1}^r \prod_{b=1}^{d_i\beta} (d_iH + bz)}{ \prod_{b=1}^{\beta} (H + bz)^{N+1}}.
\end{equation}
More generally, truncating \eqref{I} to powers of $q$ less than or equal to $1/\beta$ yields the unstable part of $J^{\epsilon}$ for any $\epsilon$.

We denote by
\begin{equation*}
  [J^\epsilon]_+(q,z) \in \mathcal{H}[\![q, z]\!]
\end{equation*}
the part of the $J$-function with non-negative powers of $z$, and we
let $\mu^\epsilon_\beta(z)$ denote the $q^\beta$-coefficient in
$-z\one + [J]^\epsilon_+(q,z)$:
\begin{equation*}
  \sum_\beta q^\beta \mu_\beta^\epsilon(z) = -z\one + [J^\epsilon]_+(q,z) = O(q).
\end{equation*}
This is sometimes referred to as the ``mirror transformation."

\subsection{The wall-crossing conjecture}

The genus-zero wall-crossing conjecture states that the function
$J^\epsilon (q, z)$ lies on the Lagrangian cone of the Gromov--Witten
theory of $Z$, or in other words, that there exists
$\bt \in \HH[\![z]\!]$ such that $J^\epsilon (q, z) = J^\infty (q, \bt, z)$.  More explicitly, the requisite $\bt$ is determined by the fact that $J^{\infty}(q,\bt,z) = z\one + \bt(-z) + O(z^{-1})$, so the fact that $J^\epsilon (q, z) = J^\infty (q, \bt, z)$ says that
\begin{equation*}
  J^\epsilon (q, z)
  = J^\infty \left(q, z\one +[J^\epsilon]_+ (q, -z), z\right)
  = J^\infty \left(q, \sum_\beta q^\beta \mu^\epsilon_\beta(-z), z\right).
\end{equation*}
Thus, the genus-zero wall-crossing conjecture can be rephrased as the agreement of the functions $J^\epsilon$ and $J^\infty$ up to a shift by the mirror transformation.

More generally, the wall-crossing conjecture in any genus can be stated on the level of virtual cycles.  To do so, we require a bit more notation.  For a tuple of nonnegative integers
$\bbeta = (\beta_1, \ldots, \beta_k)$, there is a morphism
\begin{equation*}
  b_{\bbeta}\colon \M^{\epsilon}_{g,n+k}\left(Z, \beta- \sum_{i=1}^k \beta_i\right) \rightarrow \M^{\epsilon}_{g,n}(Z, \beta)
\end{equation*}
that ``trades" the last $k$ marked points for basepoints of orders
$\beta_1, \ldots, \beta_k$.
Furthermore, there is a morphism
\begin{equation*}
  c\colon \M^{\infty}_{g,n}(Z, \beta) \rightarrow \M^{\epsilon}_{g,n}(Z,\beta)
\end{equation*}
that contracts rational tails and replaces them with basepoints.
See \cite[Sections 3.1 and 3.2]{CFKZero} and \cite[Section
1.4]{CFKMirror} for more careful definitions.

We are now ready to state our main theorem, a verification of the all-genus wall-crossing conjecture, in precise form:
\begin{theorem}
  \label{thm:main}
  Let $Z$ be a complete intersection in projective space, and fix $g\geq0$ and $n \geq 1$.  Then
  \begin{multline*}
    \sum_{\beta} q^\beta [\M^{\epsilon}_{g, n}(Z,\beta)]^{\vir} = \\
    \sum_{\beta_0, \beta_1, \ldots, \beta_k} \frac{q^{\beta_0}}{k!} b_{\bbeta*} c_* \left(\prod_{i = 1}^k q^{\beta_i} \ev_{n + i}^*(\mu_{\beta_i}^\epsilon(-\psi_{n + i})) \cap [\M^\infty_{g, n + k}(Z,\beta_0)]^{\vir} \right).
  \end{multline*}
\end{theorem}
\begin{remark}
  Theorem~\ref{thm:main} proves wall-crossing from $\infty$-stability
  to $\epsilon$-stability.
  Also, if we are given two stability parameters
  $\epsilon_+ > \epsilon_-$, Theorem~\ref{thm:main} implies
  wall-crossing from $\epsilon^+$-stability to $\epsilon^-$-stability
  as in \cite[Conjecture~1.1]{CFKMirror}.
  This can be seen by applying Theorem~\ref{thm:main} to both
  stability conditions, and then taking the difference.
\end{remark}

\begin{remark}
  As explained in \cite[Corollary 1.5]{CFKMirror}, Theorem
  \ref{thm:main} implies a comparison of the genus-$g$ generating
  functions of $\epsilon$-theory and $\infty$-theory with $n \geq 1$.
\end{remark}

\subsection{Twisted theory}
\label{subsec:twisted}

Our proof of Theorem \ref{thm:main} requires a generalization of the theory of $\epsilon$-stable quasimaps, defined via a twist by an equivariant Euler class.

Let
\begin{equation*}
  \pi\colon \mathcal{C} \rightarrow \M^{\epsilon}_{g,n}(Z,\beta)
\end{equation*}
denote the universal curve, and let $\mathcal{L}$ denote the universal
line bundle on $\mathcal{C}$.
Consider a trivial $\C^*$-action on $\M^{\epsilon}_{g,n}(Z,\beta)$,
lifted to an action scaling the fibers of $R\pi_*(\mathcal{L}^{\vee})$
with weight $-1$.
We write $\C_{(\lambda)}$ for a nonequivariantly trivial line bundle
with a $\C^*$-action of weight $1$, where $\lambda$ denotes the
equivariant parameter of the action.
To distinguish this action from the one on the graph space we write
$\C^*$ as $\C^*_{\lambda}$.

Define
\begin{equation*}
  [\M^{\epsilon}_{g,n}(Z,\beta)]^{\vir}_\tw := \frac{[\M^{\epsilon}_{g,n}(Z,\beta)]^{\vir}}{e_{\C^*_{\lambda}}\left(R\pi_*\left(\mathcal{L}^{\vee} \otimes \C_{(\lambda)}\right)\right)}.
\end{equation*}
Using this, we define a twisted $J$-function by
\begin{align*}
  J^\epsilon_\tw(q,z) &:= -z^2 e_{\C^*_{\lambda}}(\OO_Z(-1) \otimes \C_{(\lambda)})\times\\
  &\sum_{\beta} q^{\beta} (\ev_{\bullet})_* \left(\frac{[\M^{\epsilon}_{0,\bullet}(Z,\beta)]^{\vir} \cap e_{\C^*_{\lambda} \times \C^*_z}\left(-R\pi_*\left(\mathcal L^{\vee}\otimes \C_{(\lambda)}\right)\right)}{e_{\C^*_z}(N_{F^{\epsilon}_{\beta}})} \right).
\end{align*}
Here, $\C^*_z$ denotes the $\C^*$-action on the $z_2$-coordinate of
the graph space.  From the twisted $J$-function we define the twisted mirror
transformation:
\begin{equation}
\label{mutw}
  \sum_\beta q^\beta \mu_\beta^{\epsilon, \tw}(z) = -\one z + [J^\epsilon_\tw(z)]_+.
\end{equation}
In genus zero, there is again a wall-crossing conjecture of the form
\begin{equation}
  \label{JWC}
  J^\epsilon_\tw (q, z)
  = J^\infty_\tw \left(q, \sum_\beta q^\beta \mu_\beta^{\epsilon, \tw}(-z), z\right).
\end{equation}
In our situation, \eqref{JWC} has been proven in \cite[Corollary~7.3.2]{CFKZero}.

Equipped with these definitions, we can state a twisted
wall-crossing theorem, in direct analogy to Theorem~\ref{thm:main}:
\begin{theorem}
  \label{thm:twisted}
  Let $Z$ be a complete intersection in projective space, and fix
  $g\geq0$ and $n \geq 1$.
  Then
  \begin{multline*}
    \sum_{\beta} q^\beta [\M^{\epsilon}_{g, n}(Z,\beta)]^{\vir}_{\tw} = \\
    \sum_{\beta_0, \beta_1, \ldots, \beta_k} \frac{q^{\beta_0}}{k!} b_{\bbeta*} c_* \left(\prod_{i = 1}^k q^{\beta_i} \ev_{n + i}^*(\mu_{\beta_i}^{\epsilon, \tw}(-\psi_{n + i})) \cap [\M^\infty_{g, n + k}(Z,\beta_0)]^{\vir}_{\tw} \right).
  \end{multline*}
\end{theorem}

The key point, now, is the following:
\begin{lemma}
\label{twuntw}
The twisted wall-crossing theorem (Theorem \ref{thm:twisted}) implies
the untwisted wall-crossing theorem (Theorem \ref{thm:main}).
\begin{proof}
  For each $\beta$, the coefficients of $q^{\beta}$ on the two sides
  of Theorem~\ref{thm:twisted} are Laurent polynomials in the
  equivariant parameter $\lambda$.
  Then
  \begin{equation*}
    [\M^{\epsilon}_{g, n}(Z,\beta)]^{\vir}_{tw} = \sum_{i \leq g - 1 - \beta} \lambda^i C_i \cap [\M^{\epsilon}_{g, n}(Z,\beta)]^{\vir}
  \end{equation*}
  for nonequivariant classes
  $C_i \in H^{g - 1 - \beta - i}(\M^{\epsilon}_{g, n}(Z,\beta))$ for which the
  top coefficient is $C_{g - 1 - \beta} = \one$.

  The untwisted wall-crossing follows from Theorem~\ref{thm:twisted}
  by taking the coefficient of $\lambda^{g - 1 - \beta}$ on both
  sides: the left-hand side directly yields $[\M^{\epsilon}_{g, n}(Z,\beta)]^{\vir}$,
  while the only contribution on the right-hand side comes from taking
  the top power of $\lambda$ in every factor of $\mu^{\epsilon, \tw}$ and in
  $[\M^{\infty}_{g, n}(Z,\beta)]^{\vir}_{\tw}$, thus producing the right-hand
  side of Theorem~\ref{thm:main}.
\end{proof}
\end{lemma}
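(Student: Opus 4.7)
The plan is to observe that both sides of Theorem~\ref{thm:twisted}, at each fixed Novikov coefficient $q^\beta$, are Laurent polynomials in the equivariant parameter $\lambda$, and to deduce Theorem~\ref{thm:main} by extracting the top-degree coefficient in $\lambda$ from each side.

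First I would work out the $\lambda$-expansion of the twisted virtual class. Since the virtual rank of $R\pi_*\L^\vee$ is $1-g-\beta$ by Riemann--Roch, expanding the inverse of the equivariant Euler class $e_{\C^*_\lambda}(R\pi_*\L^\vee \otimes \C_{(\lambda)})$ in $\lambda$ yields an expression
\[
  [\M^\epsilon_{g,n}(Z,\beta)]^{\vir}_{\tw} = \sum_{i \le g-1-\beta} \lambda^i\, C_i \cap [\M^\epsilon_{g,n}(Z,\beta)]^{\vir},
\]
with top coefficient $C_{g-1-\beta} = \one$ and the lower $C_i$ given by Chern classes of $R\pi_*\L^\vee$. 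I would then check that $\mu^{\epsilon, \tw}_{\beta_i}(z)$ admits a completely analogous $\lambda$-expansion whose top coefficient is the untwisted $\mu^\epsilon_{\beta_i}(z)$, using the definition \eqref{mutw} together with a genus-zero Riemann--Roch count on the fixed loci of the graph space; similarly, the top $\lambda$-coefficient of $[\M^\infty_{g,n+k}(Z,\beta_0)]^{\vir}_{\tw}$ is $[\M^\infty_{g,n+k}(Z,\beta_0)]^{\vir}$.

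The final step is to balance total $\lambda$-degrees. The LHS at $q^\beta$ has top $\lambda$-degree $g-1-\beta$, and since $\sum_i \beta_i = \beta$ and the morphisms $b_{\bbeta*}$, $c_*$, and $\ev_{n+i}^*$ are nonequivariant, the top $\lambda$-degrees of the individual factors on the RHS add to the same value $g-1-\beta$. Taking the coefficient of $\lambda^{g-1-\beta}$ on both sides, the LHS yields $[\M^\epsilon_{g,n}(Z,\beta)]^{\vir}$ directly, while the only contribution on the RHS comes from taking the top $\lambda$-power in every factor, reproducing exactly the RHS of Theorem~\ref{thm:main}. The main subtlety I expect is the degree bookkeeping: confirming that subleading $\lambda$-contributions in one factor cannot combine with leading contributions in another to perturb the top coefficient. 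This follows as soon as one checks that the $\lambda$-degrees are additive across the product, which reduces to the Riemann--Roch count above together with $\sum_i \beta_i = \beta$.
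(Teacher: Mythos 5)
Your proposal is correct and follows essentially the same argument as the paper: both sides of the twisted wall-crossing are Laurent polynomials in $\lambda$, the top $\lambda$-coefficient of each twisted factor is its untwisted counterpart, and extracting the top coefficient recovers Theorem~\ref{thm:main}. One small internal inconsistency to watch: a virtual rank of $1-g-\beta$ for $R\pi_*\L^\vee$ would make the top power of the inverse equivariant Euler class $\lambda^{g+\beta-1}$ rather than $\lambda^{g-1-\beta}$ (the paper states the latter as well), but this does not affect the validity of the argument, since the degree balancing across the product on the right-hand side works out the same under either convention and the top coefficient is $\one$ in any case.
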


\section{Twisted graph space}
\label{sec:PZ}

The proof of Theorem~\ref{thm:main} proceeds by $\C^*$-localization on an enhanced version $\PP Z^{\epsilon}_{g,n,\beta,d}$ of the moduli space of $\epsilon$-stable quasimaps.  We refer to it as the ``twisted graph space", due to its resemblance to the graph space described in Section \ref{J}.  Its key feature is that quasimaps landing at one fixed point are $\epsilon$-stable and quasimaps landing at the other fixed point are $\infty$-stable.

\subsection{Definition of the twisted graph space}
\label{sec:PZ:def}

The twisted graph space consists of quasimaps to the projectivization of the vector bundle $\OO_Z(-1) \oplus \OO_Z$ on $Z$, which is the $\PP^1$-bundle
\begin{equation*}
  \PP Z := (AZ \times \C^2)\GIT_\theta (\C^*)^2
\end{equation*}
over $Z$, in which $AZ$ again denotes the affine cone over $Z$, the action of $(\C^*)^2$ is
\begin{equation*}
  (g,t) \cdot (\vec{x}, z_1, z_2) = (g \vec{x}, g^{-1}tz_1, tz_2),
\end{equation*}
and the (rational) character
$\theta\colon (\C^*)^2 \rightarrow \C^*$ is
\begin{equation*}
  \theta(g,t) = g^{\epsilon} t^3.
\end{equation*}
(See Remark \ref{rem:rational} above for the meaning of a GIT quotient
with rational character.) 

In more concrete terms, we have
\begin{equation*}
  \PP Z^{\epsilon}_{g,n,\beta,d} = \{(C; q_1, \ldots, q_n; L_1, L_2, \sigma)| \dots\}/\sim ,
\end{equation*}
where $(C;q_1, \ldots, q_n)$ is an $n$-pointed prestable curve of
genus $g$, $L_1$ and $L_2$ are line bundles on $C$ with
\begin{equation*}
  \beta = \deg(L_1), \; \; \; d = \deg(L_2),
\end{equation*}
and
\begin{equation*}
  \sigma \in \Gamma(L_1^{\oplus N + 1}  \oplus (L_1^\vee \otimes L_2) \oplus L_2).
\end{equation*}
We denote the components of $\sigma$ by $(\vec x, z_1, z_2)$.
The following conditions are required:
\begin{itemize}
\item {\it Image}: The sections $\vec{x} = (x_1, \ldots, x_{N+1})$ satisfy the equations
  \begin{equation*}
    W_1(\vec{x}) = 0 \in \Gamma(L^{\otimes d_1}), \dotsc, W_r(\vec{x}) = 0 \in \Gamma(L^{\otimes d_r}).
  \end{equation*}
\item {\it Nondegeneracy}\footnote{The nondegeneracy condition here, though cumbersome to write down, is simply the explicit statement of the length condition appearing in \cite[Section 2.1]{CFKBigI}.}: The zero set of $\vec{x}$ is finite and disjoint from the marked points and nodes of $C$.
  The sections $z_1$ and $z_2$ never simultaneously vanish.  Furthermore, for each point $q$ of $C$ at which $z_2(q)\neq 0$, we have
  \begin{equation*}
    \operatorname{ord}_q(\vec x) \le \frac{1}{\epsilon},
  \end{equation*}
  and for each point $q$ of $C$ at which $z_2(q) = 0$ (and hence $z_1(q) \neq 0$), we have
    \begin{equation*}
    \operatorname{ord}_q(\vec x) =0.
      \end{equation*}
\item {\it Stability}: The $\Q$-line bundle
  \begin{equation*}
    L_1^\epsilon \otimes L_2^{\otimes 3} \otimes \omega_{\log}
  \end{equation*}
  is ample.
\end{itemize}

There are natural evaluation maps
\begin{equation*}
  \ev_i\colon \PP Z^{\epsilon}_{g, n, \beta, d} \to \PP Z,
\end{equation*}
given, as before, by evaluating the sections $\sigma$ at $q_i$.
Moreover, since $\PP Z^{\epsilon}_{g,n,\beta,d}$ is a moduli space of
stable quasimaps to an lci GIT quotient, the results of \cite{CFKM}
imply that it is a proper Deligne--Mumford stack equipped with a
natural perfect obstruction theory relative to the stack
$\mathcal{D}_{g,n,\beta,d}$ of curves equipped with a pair of line
bundles.
This obstruction theory is of the form
\begin{equation}
\label{pot}
R\pi_*(u^*\mathbb{RT}_{\rho}).
\end{equation}
Here, we denote the universal family over $\PP Z^\epsilon_{g,n,\beta,d}$ by
\begin{equation*}
\xymatrix{
\mathcal{V}\ar_{\rho}[r] & \mathcal{C} \ar^{\pi}[d]\ar_{u}@/_{1pc}/[l]\\
& \PP Z^{\epsilon}_{g,n,\beta,d},
}
\end{equation*}
where $\LL_1$ and $\LL_2$ are the universal line bundles, and where
\begin{equation*}
  \mathcal{V} \subset \LL_1^{\oplus N+1} \oplus (\LL_1^{\vee} \otimes \LL_2) \oplus \LL_2
\end{equation*}
is the subcone defined by the vanishing of
$W_1(\vec x), \dotsc, W_r(\vec x)$.
Furthermore, we use $\mathbb{RT}_{\rho}$ to denote the relative
tangent complex of the complete intersection morphism $\rho$.
Somewhat more explicitly, \eqref{pot} equals
\begin{equation}
\label{pot2}
\mathbb{E} \oplus R\pi_*((\LL_1^{\vee} \otimes \LL_2) \oplus \LL_2),
\end{equation}
in which the summand $\mathbb{E}$ comes from the deformations and
obstructions of the sections $\vec{x}$.  Because the sections $z_1$ and $z_2$ never simultaneously vanish, they define a section $f\colon C \to \PP(L_1^\vee \oplus \OO)$.
Using this observation, we can rewrite the obstruction theory to be
more similar to the standard obstruction theory for stable maps.
More precisely, the complex
\begin{equation}
  \label{pot3}
  \mathbb{E} \oplus R\pi_* \left(f^* T_{\PP(\LL_1^\vee \oplus \OO)/\mathcal{C}}\right),
\end{equation}
is a perfect obstruction theory on $\PP Z^{\epsilon}_{g,n,\beta,d}$
relative to the stack $\mathcal{D}_{g,n,\beta}$ of curves equipped
with only one line bundle.

The virtual dimension of $\PP Z^{\epsilon}_{g,n,\beta,d}$ is equal to
\begin{equation*}
  \text{vdim}(\PP Z^{\epsilon}_{g, n, \beta, d}) = \text{vdim}(Z^{\epsilon}_{g, n, \beta}) + 2d - \beta + 1 - g.
\end{equation*}

\subsection{$\C^*$-action and fixed loci}
\label{fixedloci}

Let $\C^*$ act on $\PP Z^{\epsilon}_{g,n,\beta,d}$ with weight $-1$ on
the $z_1$-coordinate of $\sigma$.
Equivalently and most conveniently for the further computations, we
can introduce the $\C^*$-action by viewing $z_1$ now as a section of
$L_1^\vee \otimes L_2 \otimes \C_{(\lambda)}$, where $\C_{(\lambda)}$
denotes the non-equivariantly trivial line bundle of $\C^*$-weight
$1$.
We set $\lambda := c_1(\C_{(\lambda)})$; the localization
  computations that follow show that this use of $\lambda$ is
  consistent with Section \ref{subsec:twisted}.
From now on, we regard $f$ as a map to
$\PP(\mathcal L_1^\vee \otimes \C_{(\lambda)} \oplus \OO)$.
Under the replacement
$\LL_1 \leadsto \widetilde\LL_1 := \LL_1 \otimes \C_{(-\lambda)}$, the
discussion from Section~\ref{sec:PZ:def} evidently yields a
$\C^*$-equivariant perfect obstruction theory.

Analogously to the graph space $\G \M^{\epsilon}_{0, n}(Z,\beta)$, as well as to
the well-known situation in Gromov--Witten theory, the fixed loci of
the $\C^*$-action on $\PP Z^{\epsilon}_{g, n, \beta, d}$ are indexed by certain
decorated graphs.
A graph $\Gamma$ consists of vertices, edges, and $n$ legs, with the
following decorations:
\begin{itemize}
\item Each vertex $v$ is decorated by an index
  $j(v) \in \{0, \infty\}$, a genus $g(v)$, and a degree
  $\beta(v) \in \mathbb{N}$.
\item Each edge $e$ is decorated by a degree $d(e) \in \mathbb{N}$.
\item Each leg is decorated by an element of $\{1, 2, \ldots, n\}$.
\end{itemize}
The valence $\val(v)$ of a vertex $v$ denotes the total number
of incident legs and half-edges.

The fixed locus in $\PP Z^{\epsilon}_{g, n, \beta, d}$ indexed by the decorated
graph $\Gamma$ parameterizes quasimaps of the following type:
\begin{itemize}
\item Each edge $e$ corresponds to a genus-zero component $C_e$ on which 
  \begin{equation*}
    \deg(L_2|_{C_e}) = d(e),
  \end{equation*}
  and which has two distinguished ``ramification points" $q_1$, $q_2$.
  The section $z_1$ vanishes nowhere except possibly at $q_1$, while
  $z_2$ vanishes only at $q_2$, and all of the marked points, all of
  the nodes, and all of the degree of $L_1|_{C_e}$ is concentrated at
  $q_1$ and $q_2$.
  That is,
  \begin{equation*}
    \deg(L_1|_{C_e}) = \text{ord}_{q_1}(\vec{x}) + \text{ord}_{q_2}(\vec{x}),
  \end{equation*}
  so if both ramification points are special points, it follows that the degree of $L_1|_{C_e}$ is zero.
\item Each vertex $v$ for which $j(v) = 0$ (with certain unstable
  exceptional cases noted below) corresponds to a maximal sub-curve
  $C_v$ of $C$ over which $z_1 \equiv 0$, and each vertex $v$ for
  which $j(v) = \infty$ (again with some exceptions) corresponds to a
  maximal sub-curve over which $z_2 \equiv 0$.
  The label $g(v)$ denotes the genus of $C_v$, the label $\beta(v)$
  denotes the degree of $L_1|_{C_v}$, and the legs incident to $v$
  indicate the marked points on $C_v$.
\item A vertex $v$ is unstable if stable quasimaps of the type described
  above do not exist.  In this case, $v$ corresponds to a single point of the component
  $C_e$ for each adjacent edge $e$, which may be a node at which $C_e$
  meets $C_{e'}$, a marked point of $C_e$, or a basepoint on $C_e$ of
  order $\beta(v)$.
  (It is possible to have $\beta(v) = 0$, in which case $v$ corresponds to a nonspecial point that is not a basepoint.)
\end{itemize}

Observe that for a stable vertex $v$ such that $j(v) = 0$, we have
$z_1|_{C_v} \equiv 0$, so the nondegeneracy condition implies that
$\operatorname{ord}_q(\vec x) < 1/\epsilon$ for each $q \in C_v$. 
That is, the restriction of $(C; q_1, \ldots, q_n; L_1; \vec x)$ to
$C_v$ defines an element of
$\M^{\epsilon}_{g(v), \val(v)}(Z,\beta(v))$. 
On the other hand, for a stable vertex $v$ such that $j(v) = \infty$,
we $z_2|_{C_v} \equiv 0$, so the non-degeneracy condition implies that
$\vec x$ is nowhere-vanishing on $C_v$. 
Thus, the restriction of $(C; q_1, \ldots, q_n; L_1; \vec x)$ to $C_v$
defines an element of $\M^{\infty}_{g(v), \val(v)}(Z,\beta(v))$.
Finally, for an edge $e$, the restriction of $\vec{x}$ to $C_e$
defines a constant map to $Z$ (possibly with an additional basepoint
at the ramification point where $z_1 = 0$) because otherwise
$(C_e; n_1, n_2; L_1|_{C_e}; \vec x|_{C_e})$ (where $n_1$ and $n_2$
are the nodes on $e$) has no automorphisms so that the section
$C_e \to \PP(L_1^\vee \oplus \OO)$ cannot be $\C^*$-fixed.

\begin{remark}
\label{rem:dbeta}
We observe that if $C_e$ is an edge component containing a basepoint of order $\beta(e)$, then one must have $d(e) \ge \beta(e)$, since otherwise $z_1 \equiv 0$ and, given that $z_2$ must vanish somewhere, this is impossible without violating the nondegeneracy condition in the definition of the twisted graph space.
\end{remark}

Denote by $F_{\Gamma}$ the moduli space
\begin{equation*}
  \prod_{\substack{v \text{ stable}\\ j(v) = 0}} \M^{\epsilon}_{g(v), \val(v)}(Z,\beta(v)) \times_Z \prod_{\text{edges } e} Z^{\frac 1{d(e)}} \times_Z \prod_{\substack{v \text{ stable}\\ j(v) = \infty}} \M^{\infty}_{g(v), \val(v)}(Z,\beta(v)),
\end{equation*}
in which the fiber product over $Z$ imposes that the evaluation maps
at the two branches of each node agree.
Here, $Z^{\frac 1d}$ denotes the $d$th root stack over $Z$
corresponding to the line bundle $\OO(-1) \otimes \C_{(\lambda)}$; the appearance of the $d$th root stack is explained in Section~\ref{sec:loc:edge}.
The discussion of the previous paragraph implies that there is a
canonical family of $\C^*$-fixed elements of
$\PP Z^{\epsilon}_{g, n, \beta, d}$ over $F_{\Gamma}$, yielding a
morphism
\begin{equation*}
  \iota_{\Gamma}\colon F_{\Gamma} \rightarrow \PP Z^{\epsilon}_{g, n, \beta, d}.
\end{equation*}
This is not exactly the inclusion of the associated fixed locus,
because elements of $\PP Z^{\epsilon}_{g, n, \beta, d}$ have additional
automorphisms from permuting the components $C_v$ via an automorphism
of $\Gamma$.
Nevertheless, there is a finite map from $F_{\Gamma}$ to the fixed
locus whose degree can be explicitly calculated, and this is
sufficient for our purposes.

\subsection{Localization contributions}

The virtual localization formula of Graber--Pandharipande \cite{GP}
and improved by Chang--Kiem--Li \cite{CKL} expresses
$[\PP Z^{\epsilon}_{g, n, \beta, d}]^{\vir}$ in terms of contributions
from each fixed-locus graph $\Gamma$:
\begin{equation}
  \label{localization}
  [\PP Z^{\epsilon}_{g, n, \beta, d}]^{\vir} = \sum_{\Gamma} \frac{1}{|\text{Aut}(\Gamma)|}\iota_{\Gamma*} \left(\frac{[F_{\Gamma}]^{\vir}}{e(N_{\Gamma}^{\vir})}\right).
\end{equation}
Here, $[F_{\Gamma}]^{\vir}$ is computed via the $\C^*$-fixed part of
the restriction to the fixed locus of the obstruction theory on
$\PP Z^{\epsilon}_{g,n,\beta,d}$, and $e(N_{\Gamma}^{\vir})$ as the
equivariant Euler class of the $\C^*$-moving part of this restriction.
We note that when pushing forward this expression to
$Z^\epsilon_{g, n, \beta}$, the integration over the factors
$Z^{\frac 1{d(e)}}$ yields an additional global factor of
$\prod_e (d(e))^{-1}$.

The goal of this subsection is to compute the contributions of each
graph $\Gamma$ explicitly.
In order to compute the contribution of a graph $\Gamma$ to
\eqref{localization}, one must first apply the normalization exact
sequence to the relative obstruction theory \eqref{pot2}, thus
breaking the contribution of $\Gamma$ to \eqref{localization} into
vertex, edge, and node factors.
This accounts for all but the automorphisms and deformations within
$\mathcal{D}_{g,n,\beta,d}$.
The latter come from deformations of the vertex components and their
line bundles, deformations of the edge components and their line
bundles, and deformations smoothing the nodes; these are included in
the vertex, edge, and node contributions, respectively, in what
follows.  We include the factors from automorphisms of the source curve also in
the edge contributions.

\subsubsection{Vertex contributions}

Let $v$ be a stable vertex corresponding to a sub-curve $C_v$ via $f$
contracted to the $j(v)$-section of
$\PP(\widetilde\LL_1^\vee|_{C_v} \oplus \OO)$.
The sections $\vec{x}$ are $\C^*$-fixed, and the deformations of these
sections, together with the deformations of $C_v$ and the line bundle
$\LL_1|_{C_v}$, contribute the virtual cycle
$[\M^{\epsilon}_{g(v),\val(v)}(Z,\beta(v))]^{\vir}$ or
$[\M^{\infty}_{g(v),\val(v)}(Z,\beta(v))]^{\vir}$ on the vertex
moduli, when $j(v) = 0$ or $j(v) = \infty$, respectively.

On the other hand, the line bundle
$f^* T_{\PP(\widetilde\LL_1^\vee|_{C_v} \oplus \OO)/\mathcal C}$ is
$\C^*$-moving.
The restriction of
$T_{\PP(\widetilde\LL_1^\vee|_{C_v} \oplus \OO)/\mathcal C}$ to the zero
section is $\widetilde\LL_1^\vee|_{C_v}$, while its restriction to the
infinity section is $\widetilde\LL_1|_{C_v}$.  Thus, in total a stable vertex $v$ with $j(v) = 0$ contributes
\begin{equation*}
  \frac{[\M^{\epsilon}_{g(v),\val(v)}(Z,\beta(v))]^{\vir}}{e^{\C^*}\left(R\pi_*\left(\LL^{\vee} \otimes \C_{(\lambda)}\right)\right)},
\end{equation*}
while a stable vertex $v$ with $j(v) = \infty$ contributes
\begin{equation*}
  \frac{[\M^{\infty}_{g(v),\val(v)}(Z, \beta(v))]^{\vir}}{e^{\C^*}\left(R\pi_*\left(\LL \otimes \C_{(-\lambda)}\right)\right)}
\end{equation*}
to $[F_{\Gamma}]^{\vir}/e(N_{\Gamma}^{\vir})$, where $\LL$ is the
universal line bundle.

\subsubsection{Edge contributions}
\label{sec:loc:edge}

Consider an edge $C_e$ that has a special point at each of its
ramification points.
Then the edge moduli in $F_{\Gamma}$ is $Z^{\frac 1{d(e)}}$,
parameterized by the image of the constant morphism induced by
$\vec{x}$.
Let us abbreviate $\widetilde Z := Z^{\frac 1{d(e)}}$ in this section.
By its universal property, the root stack $\widetilde Z$ possesses a $d(e)$th
root $\mathcal R := \OO_{\widetilde Z}(-1/d(e)) \otimes \C_{(\lambda/d(e))}$ of
$\OO_{\widetilde Z}(-1) \otimes \C_{(\lambda)}$.
The universal family over the edge moduli space is
\begin{equation}
  \label{universalfam}
  \xymatrix{\mathcal{C}_e:= \PP\left(\mathcal R \oplus \OO_{\widetilde Z}\right) \ar[r]^-{f}\ar[d]_{\pi} & \PP(\widetilde\LL_1^\vee|_{C_e} \oplus \OO_{\widetilde Z}) \\
    \M_e := \widetilde Z, &}
\end{equation}
where $f$ is given in coordinates on the $\PP^1$-fibers by the map
$[x: y] \mapsto [x^{d(e)}: y^{d(e)}]$.
Here, we should regard $x$ and $y$ as restrictions of the
universal sections
\begin{equation*}
  x \in H^0\left(\mathcal R \otimes \OO_{C_e}(1)\right)
\end{equation*}
and
\begin{equation*}
  y \in H^0(\OO_{C_e}(1)).
\end{equation*}
They provide isomorphisms of line bundles
\begin{equation*}
  \OO([0]) \xrightarrow{\cong} \mathcal R \otimes \OO_{C_e}(1), \qquad
  \OO([\infty]) \xrightarrow{\cong} \OO_{C_e}(1),
\end{equation*}
where $[0]$ and $[\infty]$ denote the zero and infinity section of
$\mathcal C_e$, respectively.
The line bundle
$T_{\PP(\widetilde\LL_1^\vee|_{C_e} \oplus \OO_{\widetilde Z})/\mathcal
  C_e}$ corresponds to the sum of the zero and infinity section of
$\PP(\widetilde\LL_1^\vee|_{C_e} \oplus \OO_{\widetilde Z})$, and
therefore
$f^* T_{\PP(\widetilde\LL_1^\vee|_{C_e} \oplus \OO_{\widetilde
    Z})/\mathcal C_e} \cong \OO(d(e)[0] + d(e)[\infty])$.

From here, we can easily describe the contribution of the edge $e$.
There are no infinitesimal deformations of $C_e$ or the line bundle
$L_1|_{C_e}$, since $C_e$ is rational.
The deformations of the sections $\vec{x}$ are fixed, so together with
the automorphisms of $L_1|_{C_e}$ they contribute the virtual class,
which is simply $[\widetilde Z]$.
As for the deformations of $f$, we note that the space of sections of
$f^* T_{\PP(\widetilde\LL_1^\vee|_{C_e} \oplus \OO_{\widetilde
    Z})/\mathcal C_e}$ is spanned by a constant section, the sections
$x^a$ for $1 \le a \le d(e)$, and the sections $y^b$ for
$1 \le b \le d(e)$.
The fixed factor corresponding to the constant section cancels with
the automorphism factor corresponding to rescaling the curve.
The remaining part of
$R\pi_* (f^* T_{\PP(\widetilde\LL_1^\vee|_{C_e} \oplus \OO_{\widetilde
    Z})/\mathcal C})$ is moving, and we have
\begin{equation*}
  R\pi_* (f^* T_{\PP(\widetilde\LL_1^\vee|_{C_e} \oplus \OO_{\widetilde Z})/\mathcal C})^{\mathrm{mov}}
  = \prod_{a = 1}^{d(e)} \frac{a(\lambda - H)}{d(e)} \cdot \prod_{b = 1}^{d(e)} \frac{b(H - \lambda)}{d(e)}.
\end{equation*}

Now, suppose that $C_e$ has a special point at only one of its
ramification points.
Then $e$ is adjacent to an unstable vertex $v$, corresponding to a
basepoint of $\vec{x}$ of order $\beta(v)$.
The sections $\vec{x}$ still determine an underlying constant map to
$Z$, so the universal family over the edge moduli space is still given
by \eqref{universalfam}; the only difference in this situation is that
the universal map is now given by
$[x: y] \mapsto [x^{d(e) - \beta(v)}: y^{d(e)}]$, which is
well-defined by Remark~\ref{rem:dbeta}.

The normal directions to deformations of $\vec{x}$ are computed along
the lines of \cite{Bertram}.
Namely, the sections $\vec{x}$ must be of the form
\begin{equation*}
  \vec{x} = (c_1x^{\beta(v)}, \ldots, c_{N+1}x^{\beta(v)})
\end{equation*}
for constants $(c_1, \ldots, c_{N+1}) \in \C^{N+1} \setminus 0$, and
normal directions come from deforming the $x_i$ away from such
sections.
These contribute
\begin{equation}
  \label{Jdenom}
  \prod_{a=1}^{\beta(v)} \left(H + \frac{a}{d(e)}(\lambda-H)\right)^{N+1}
\end{equation}
to the Euler class of the virtual normal bundle.

There is also a part
\begin{equation*}
  \bigoplus_{i=1}^r R\pi_*\left(\mathcal{L}_1^{\otimes d_i}|_{C_e}\right)
  = \bigoplus_{i=1}^r \OO_{\widetilde Z}(d_i) \otimes R\pi_*\left(\OO_{\mathcal{C}_e}(d_i\beta(v))\right)
\end{equation*}
of the summand $\mathbb{E}$ in the obstruction
theory \eqref{pot2} corresponding to the obstructions to moving the
sections $\vec x$ away from $Z \subset \PP^N$.
Its moving part gives a contribution of
\begin{equation}
  \label{Jnum}
  \prod_{i=1}^r \prod_{a = 1}^{d_i\beta(v)} \left(d_i H + \frac{a}{d(e)} (\lambda-H)\right),
\end{equation}
to the virtual cycle.

Finally, a similar calculation to what we have done in the case of
edges without basepoints shows that the moving part of
$R\pi_* (f^* T_{\PP(\widetilde\LL_1|_{C_e} \oplus \OO_{\widetilde Z})/\mathcal C})$ contributes
\begin{equation*}
  \prod_{a = 1}^{d(e) - \beta(v)} \left(\frac{a}{d(e)}(\lambda-H)\right) \cdot \displaystyle \prod_{b=1}^{d(e)} \left(\frac{b}{d(e)}(H - \lambda)\right)
\end{equation*}
to the Euler class of the virtual normal bundle.

Observe that the quotient of \eqref{Jnum} by \eqref{Jdenom} is equal to the $q^{\beta(v)}$-coefficient in $z^{-1}J^{\epsilon}(q,z)$, under the substitution
\begin{equation}
\label{zsub}
z = \frac{\lambda - H}{d(e)}.
\end{equation}
Furthermore, in the twisted $\epsilon$-stable $J$-function, the twist
in the $q^{\beta(v)}$-coefficient is
\begin{align*}
  &e^{\C^*_{\lambda} \times \C^*_{z}}\left(R\pi_*(\mathcal{L}^{\vee}) \otimes \C_{(\lambda)}\right)^{-1}\\
  =&e^{\C^*_{\lambda} \times \C^*_{z}}\left(\big(\OO_Z(-1) \otimes \C_{(\lambda)}\big) \otimes-R\pi_*\left(\OO_{\mathcal C}(-\beta(v))\right) \right)\\
  =&\prod_{a=0}^{\beta(v) -1} (\lambda - H + az),
\end{align*}
which, under the substitution \eqref{zsub}, becomes
\begin{equation*}
  \prod_{a = d(e)-\beta(v)-1}^{d(e)} \left(\frac{a}{d(e)} (\lambda - H)\right).
\end{equation*}
Thus, we can express the entire contribution of an edge $e$ on which
$C_e$ has basepoints as
\begin{equation*}
  \frac{\left[\left.z^{-1}J^{\epsilon,\tw}\left(q,z\right)\right|_{z = \frac{\lambda - H}{d(e)}}\right]_{q^{\beta(v)}}}{\displaystyle \prod_{b=1}^{d(e)} \left(\frac{b}{d(e)}(\lambda - H)\right) \prod_{b=1}^{d(e)} \left(\frac{b}{d(e)}(H - \lambda)\right)},
\end{equation*}
where $[\cdot]_{q^{\beta}}$ denotes the $q^{\beta}$-coefficient of a power series in $q$.

Finally, we note that there are additional contributions to the virtual normal bundle from automorphisms of the edge components.  The only such automorphisms with nontrivial torus weight come from moving an unmarked ramification point, which occurs if $e$ is incident to an unstable vertex $v$ of valence $1$.  Such a vertex contributes
\begin{equation*}
  \frac{\lambda_{j(v)}}{d(e(v))}
\end{equation*}
to the inverse Euler class of the virtual normal bundle, where
\begin{equation*}
  \lambda_0 := \lambda - H, \qquad
  \lambda_\infty := H - \lambda
\end{equation*}
and $e(v)$ denotes the unique edge adjacent to $v$.

\subsubsection{Node contributions}

The deformations in $\mathcal{D}_{g,n,\beta}$ smoothing a node
contribute to the Euler class of the virtual normal bundle as the
first Chern class of the tensor product of the two cotangent line
bundles at the branches of the node.
For nodes at which a component $C_e$ meets a component $C_v$, this
contribution is
\begin{equation*}
  \frac{\lambda_{j(v)}}{d(e)} - \psi_v.
\end{equation*}
For nodes at which a component $C_e$ meets a component $C_{e'}$, the node-smoothing contribution is
\begin{equation*}
  \frac{\lambda_{j(v)}}{d(e)} + \frac{\lambda_{j(v)}}{d(e')},
\end{equation*}
where $v$ is the unstable vertex at which $e$ and $e'$ meet.
To ease notation, we combine the above two situations by writing the
contribution in either case as
\begin{equation*}
  -\psi - \psi',
\end{equation*}
where $\psi$ and $\psi'$ indicate the (equivariant) cotangent line
classes at the two branches of the node.

As for the node contributions from the normalization exact sequence,
each node $q$ (specified by a vertex $v$) contributes
\begin{equation}
  \label{node}
  e(R\pi_* (f^* T_{\PP(\LL_1^\vee \oplus \OO)/\mathcal C}|_q)) = \lambda_{j(v)}
\end{equation}
to the inverse Euler class of the virtual normal bundle.

\subsubsection{Total fixed-locus contribution}

Combining all of the above computations, we find that for a graph
$\Gamma$, the contribution
$\frac{[F_{\Gamma}]^{\vir}}{e(N_{\Gamma}^{\vir})}$ of $\Gamma$ to
$[\PP Z^{\epsilon}_{g, n, \beta, d}]^{\vir}$ equals:
\begin{align*}
  &\prod_{\substack{v \text{ stable}\\ j(v) =0}} [\M^{\epsilon}_{g(v),\val(v)}(Z,\beta(v))]^{\vir}_{\tw} \cdot \prod_{\substack{v \text{ stable}\\ j(v) =\infty}} \frac{[\M^{\infty}_{g(v),\val(v)}(Z,\beta(v))]^{\vir}}{e^{\C^*}\left(R\pi_*(\LL) \otimes \C_{(-\lambda)}\right)} \\
  \times & \prod_{e} \prod_{b=1}^{d(e)} \left(\frac{b}{d(e)}\lambda_0\lambda_\infty\right)^{-1} \times \prod_{\substack{v \text{ unstable}\\ (g(v), j(v)) = (0, \infty)}} \frac{\lambda_\infty}{d(e(v))} \\
  \times & \prod_{\substack{v \text{ unstable}\\ (g(v), j(v)) = (0, 0)}} \left[J^{\epsilon,\tw}\left(q,z\right)\bigg|_{z = \frac{\lambda_0}{d(e(v))}}\right]_{q^{\beta(v)}}\times \prod_{\text{nodes}} \frac{\lambda_{j(v)}}{-\psi - \psi'},
\end{align*}
Denoting the above by $\text{Contr}_{\Gamma}$, we have
\begin{equation*}
  [\PP Z^{\epsilon}_{g,n,\beta,d}]^{\vir} = \sum_{\Gamma} \frac{1}{|\text{Aut}(\Gamma)|} \iota_{\Gamma*} \left(\text{Contr}_{\Gamma}\right).
\end{equation*}

\section{Equivariant projective line}
\label{P1}

When $\beta=0$, torus localization on the twisted graph space is closely related to localization on the moduli space of stable maps to $\PP^1$.  In this section, we perform explicit computations of generating series related to localization on $\PP^1$, which play a role in the twisted graph space localization to follow.

First, note that the discussion in Section~\ref{fixedloci} can be specialized to the case when $N = 1$ and $r = 0$, in which case it recovers the Graber--Pandharipande localization formula for the Gromov--Witten theory of the projective line.

As in Section \ref{fixedloci}, the $\C^*$-fixed loci in
$\M_{g,n}(\PP^1, d)$ can be indexed by $n$-legged graphs $\Gamma$,
where each vertex $v$ is decorated by an index
$j(v) \in \{0, \infty\}$ and a genus $g(v)$, and each edge $e$ is
decorated by a degree $d(e) \in \mathbb{N}$.
Each vertex $v$ corresponds to a maximal sub-curve of genus $g(v)$
contracted to the single point $j(v) \in \PP^1$, or, in the unstable
case where the vertex has genus zero and valence one or two, to a
single point in the source curve.
Each edge $e$ corresponds to a noncontracted component, which is
necessarily of genus zero, and on which the map to $\PP^1$ is of the
form $[x:y] \mapsto [x^{d(e)}: y^{d(e)}]$ in coordinates.

Fix insertions $\alpha_1, \ldots, \alpha_n \in H^*_{\C^*}(\PP^1)$, and
let $p\colon \M_{g,n}(\PP^1, d) \to \M_{g, n}$ be the forgetful map.
Then the localization formula expresses the class
\begin{equation*}
  p_*\left(\prod_{i=1}^n \ev_i^*(\alpha_i) \cap [\M_{g,n}(\PP^1,d)]^{\vir}\right)
\end{equation*}
as a sum over contributions from each fixed-point graph $\Gamma$.
These expressions can be stated more efficiently by considering the generating series
\begin{equation}
  \label{int}
  \sum_{d = 0}^\infty y^d p_*\left(\prod_{i=1}^n \ev_i^*(\alpha_i) \cap [\M_{g,n}(\PP^1,d)]^{\vir}\right)
\end{equation}
for a Novikov variable $y$.

Let $\Phi_j$ denote the sum of all contributions to \eqref{int} from
graphs $\Gamma$ on which there is a vertex $v$ with $g(v) = g$ and
$j(v) = j$, and such that, after stabilization, the generic curve in
the moduli space corresponding to $\Gamma$ is smooth; the second condition means that there is no tree emanating from $v$
which contains more than one marking.
Therefore, emanating from the vertex $v$ on such a graph, there are
exactly $n$ trees containing a marking (which may be empty, if the marking lies on $v$) and $l$
trees with no marking, for some integer $l$.
It follows that
\begin{equation}
  \label{Phim}
  \Phi_j = \sum_{l=0}^{\infty} \frac 1{l!} \pi_{l*}\left(e_j \prod_{k = 1}^n S_j(\alpha_k, \psi_k) \prod_{k = n+1}^{n+l} \epsilon_j(\psi_k)\right),
\end{equation}
in which $\pi_l\colon \M_{g, n + l} \to \M_{g, n}$ denotes the forgetful map.  Here,
\begin{equation*}
  e_j := \sum_{i = 0}^g c_i(\mathbb E) (\sigma(j) \lambda)^{g - 1 - i},
\end{equation*}
where $\mathbb E$ is the Hodge bundle and
\begin{equation*}
  \sigma(j) :=
  \begin{cases}
    1 & \text{if } j = 0, \\
    -1 & \text{if } j = \infty.
  \end{cases}
\end{equation*}
The series $S_j(\alpha, z)$ in \eqref{Phim} is the universal
generating series of localization contributions from trees emanating
from a vertex $v$ with $j(v) = j$ that contains exactly one of the
markings and has an insertion of $\alpha \in H_{\C^*}^*(\PP^1)$, and
the series $\epsilon_j(z)$, similarly, is the generating series of
localization contributions of a tree containing none of the markings.

Let $\overline\psi_k$ be the pullback under $\pi_l$ of the class $\psi_k$ on $\M_{g, n}$.
It is well-known that $\overline\psi_k$ differs from $\psi_k$ exactly
on the boundary divisors of $\M_{g, n}$ where the $k$th marking and some of
the last $l$ markings lie on a rational tail.
By rewriting the classes $\psi_1, \dotsc, \psi_n$ in terms of
$\overline\psi_1, \dotsc, \overline\psi_n$ and boundary divisors, and
for each summand integrating along the fibers of the map forgetting
all markings of the involved boundary divisors, we can rewrite
$\Phi_j$ in the form
\begin{equation*}
  \Phi_j = \sum_{l=0}^{\infty} \frac 1{l!} \pi_{l*}\left(e_j \prod_{k = 1}^n \widetilde S_j(\alpha_k, \overline\psi_k) \prod_{k = n+1}^{n+l} \epsilon_j(\psi_k)\right)
\end{equation*}
for modified universal series $\widetilde S_j(\alpha, z)$.
Surprisingly, the series $\widetilde S_j(\alpha, z)$ is easier to
compute than $S_j(\alpha, z)$.
In fact, it is closely related to the $R$-matrix for the equivariant
Gromov--Witten theory of $\PP^1$.

For the proof of Theorem~\ref{thm:main}, what is relevant from this
formula is $\widetilde S_j(\alpha, 0)$.
In fact, this contribution is related to the idempotents in the
quantum cohomology ring of $\PP^1$.
Recall that the equivariant quantum cohomology of $\PP^1$ is isomorphic
to
\begin{equation*}
  \C[\lambda, H][\![y]\!]/(H(H - \lambda) - y),
\end{equation*}
where $H \in H^*_{\C^*}(\PP^1)$ is the equivariant hyperplane class.
It is easy to check that this ring is semisimple with idempotents
$e_0, e_\infty$ given by
\begin{equation*}
  e_0 = \frac{-\lambda/2 + (\lambda/2) \sqrt\phi + H}{\lambda \sqrt\phi}, \qquad
  e_\infty = \frac{\lambda/2 + (\lambda/2) \sqrt\phi - H}{\lambda \sqrt\phi},
\end{equation*}
where
\begin{equation*}
  \phi := 1 + \frac{4y}{\lambda^2}.
\end{equation*}

\begin{lemma}
  \label{lem:P1}
  We have the identities
  \begin{align*}
    \widetilde S_0(\one_{\PP^1}, 0) = \widetilde S_\infty(\one_{\PP^1}, 0) = \phi^{-1/4}, \\
    \widetilde S_0(H, 0) = \phi^{-1/4}\left(\frac\lambda 2 + \frac\lambda 2 \sqrt\phi\right), \\
    \widetilde S_\infty(H, 0) = \phi^{-1/4}\left(\frac\lambda 2 - \frac\lambda 2 \sqrt\phi\right).
  \end{align*}
\end{lemma}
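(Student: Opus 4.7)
The plan is to identify $\widetilde{S}_j(\alpha, z)$, after the descendant-to-ancestor conversion already performed in the excerpt, with a suitably normalized entry of the Givental $R$-matrix for the semisimple equivariant Gromov--Witten theory of $\P^1$. Under this identification, $\widetilde{S}_j(\alpha, 0)$ records the decomposition of $\alpha$ in the canonical basis of idempotents $\{e_0, e_\infty\}$ of the equivariant quantum cohomology $\C[\lambda, H][\![y]\!]/(H(H-\lambda) - y)$, up to a normalization constant that I claim is $\sigma(j)\lambda\phi^{1/4}$.

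The required input then boils down to a short calculation of equivariant Poincar\'e pairings. Using $\int_{\P^1}\mathbf{1} = 0$, $\int_{\P^1} H = 1$, and the quantum relation $H^2 = \lambda H + y$, one obtains
\begin{align*}
(e_0, e_0) &= \tfrac{1}{\lambda\sqrt{\phi}}, & (e_\infty, e_\infty) &= -\tfrac{1}{\lambda\sqrt{\phi}}, \\
(H, e_0) &= \tfrac{1+\sqrt{\phi}}{2\sqrt{\phi}}, & (H, e_\infty) &= \tfrac{-1+\sqrt{\phi}}{2\sqrt{\phi}},
\end{align*}
while $(\mathbf{1}, e_j) = (e_j, e_j)$ by the orthogonality of the idempotents and $\mathbf{1} = e_0 + e_\infty$. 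Substituting into the formula $\widetilde{S}_j(\alpha, 0) = \sigma(j)\lambda\phi^{1/4}(\alpha, e_j)$ produces all three claimed identities on the nose: for $\alpha = \mathbf{1}$ the factor $(e_j, e_j) = \pm\tfrac{1}{\lambda\sqrt{\phi}}$ combines with $\sigma(j)\lambda\phi^{1/4}$ to give $\phi^{-1/4}$, and for $\alpha = H$ the pairings $(H, e_j) = \tfrac{\pm 1 + \sqrt{\phi}}{2\sqrt{\phi}}$ combine to give $\phi^{-1/4}(\lambda/2 \pm \lambda\sqrt{\phi}/2)$.

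The hard part is rigorously justifying the normalization constant $\sigma(j)\lambda\phi^{1/4}$. The cleanest justification proceeds by stationary phase analysis of the equivariant $J$-function of $\P^1$ as $z \to 0$: the factor arises from the square root of the Hessian of the superpotential at the critical point labeled by $j$, with the sign $\sigma(j)$ tracking the tangent weight $\pm\lambda$ at that fixed point and the Hodge contribution $e_j$ in the localization formula. An equivalent approach is to compute $\widetilde{S}_j(\alpha, z)$ order by order in $y$ via the equivariant quantum differential equation $z\partial_y S = H \star_y S$, and then verify the closed form by induction. A convenient sanity check is the specialization $y = 0$ (so $\phi = 1$), in which both sides reduce to the classical fixed-point evaluations $\mathbf{1}|_j = 1$, $H|_0 = \lambda$, and $H|_\infty = 0$.
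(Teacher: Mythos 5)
Your formula $\widetilde S_j(\alpha,0)=\sigma(j)\lambda\phi^{1/4}(\alpha,e_j)$ is correct, and your pairing computations are all accurate, so the algebra that follows does deliver the three identities. However, the proof has a genuine gap exactly where you flag it: the normalization constant $\sigma(j)\lambda\phi^{1/4}$ is asserted, not established. The stationary-phase and QDE arguments you mention would work for the Givental $S$-matrix, but $\widetilde S_j(\alpha,z)$ is not the $S$-matrix by definition --- it is the universal localization-tree generating series after the $\psi\mapsto\overline\psi$ substitution --- and the identification of the two, with the correct normalization at $z=0$, is precisely the content that needs to be proven. The sanity check at $y=0$ (where $\phi=1$ and the idempotents become classical fixed-point classes) confirms the leading term but gives no control over the $\phi$-dependence, which is the nontrivial part of the lemma.

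The paper closes this gap by a self-contained TQFT comparison rather than any appeal to stationary phase or the QDE. It computes the cohomological-degree-zero part $\omega_{g,n}$ of the pushforward $p_*(\prod\ev_i^*\alpha_i\cap[\M_{g,n}(\P^1,d)]^{\vir})$ in two ways: once by virtual localization, which after applying the string and dilaton equations collapses to $\sum_j(\sigma(j)\lambda)^{g-1}(1-\te_j^1)^{-(2g-2+n)}\prod_i\widetilde S_j(\alpha_i,0)$, and once by the explicit semisimple TQFT formula determined by the equivariant Poincar\'e pairing and quantum product. Comparing $\omega_{g,0}$ for varying $g$ pins down $1-\te_j^1=\phi^{-1/4}$, and then comparing $\omega_{0,3}$ on the idempotent basis yields a small algebraic system whose solution, together with the $y=0$ initial conditions \eqref{eq:p1limitS}, uniquely determines $\widetilde S_j(e_i,0)$. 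This is exactly the rigorous justification of your normalization factor: the paper derives it rather than positing it. If you wanted to carry out your route instead, you would first need to prove a Givental--Teleman-type comparison identifying $\widetilde S_j$ with a column of the equivariant $S$-matrix for $\P^1$ in canonical coordinates, which is strictly more work than the TQFT argument.
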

As a consequence of this lemma, we have
\begin{equation}
  \label{fi0}
  \frac{\widetilde S_0(H, 0)}{\widetilde S_0(\one_{\PP^1}, 0)} = \frac\lambda 2 + \frac\lambda 2 \sqrt\phi.
\end{equation}
In particular, we note that this quantity is not a rational function of $\lambda$; this observation plays a key role in the proof of Theorem~\ref{thm:main}.

\begin{proof}[Proof of Lemma \ref{lem:P1}]
  Let $\omega_{g, n}(\alpha_1, \dotsc, \alpha_n)$ be the part of
  \eqref{int} in cohomological degree zero.
There are two ways of computing $\omega_{g, n}$: on the one hand, we can employ virtual localization, but on the other hand, we also know that the multilinear forms $\omega_{g, n}$
  form a topological field theory, which is determined by the
  equivariant Poincar\'{e} pairing and the quantum product.

  We first consider the computation of the degree-zero part of
  \eqref{int} via localization.
  In fact, $\omega_{g, n}(\alpha_1, \dotsc, \alpha_n)$ equals the
  degree-zero part of $\Phi_0 + \Phi_\infty$, because all localization
  graphs which do neither contribute to $\Phi_0$ or $\Phi_\infty$ give
  rise to a contribution supported on a nontrivial stratum of
  $\M_{g, n}$.
  Using that the Hodge bundle has rank $g$, we see that
  $\omega_{g, n}(\alpha_1, \dotsc, \alpha_n)$ equals the degree zero
  part of
  \begin{equation}
    \label{eq:p1loc}
    \sum_{j \in \{0, \infty\}} \sum_{l=0}^{\infty} \frac 1{l!} \pi_{l*}\left((\sigma(j) \lambda)^{g - 1} \prod_{k = 1}^n \widetilde S_j(\alpha_k, \overline\psi_k) \prod_{k = n+1}^{n+l} \epsilon_j(\psi_k)\right).
  \end{equation}
  By repeated application of the string equation for the last
  $l$ arguments (moving along the string flow), one can rewrite this
  in the form
  \begin{equation*}
    \sum_{j \in \{0, \infty\}} \sum_{l=0}^{\infty} \frac 1{l!} \pi_{l*}\left((\sigma(j) \lambda)^{g - 1} \prod_{k = 1}^n \widetilde S_j(\alpha_k, \overline\psi_k) \prod_{k = n+1}^{n+l} \te_j(\psi_k)\right),
  \end{equation*}
  where $\te_j$ is a new universal series such that $\te_j(0) = 0$.
  So,
  \begin{multline*}
    \omega_{g, n}(\alpha_1, \dotsc, \alpha_n) \\
    = \sum_{j \in \{0, \infty\}} \sum_{l=0}^{\infty} \frac 1{l!} \pi_{l*}\left((\sigma(j) \lambda)^{g - 1} \prod_{k = 1}^n \widetilde S_j(\alpha_k, \overline\psi_k) \prod_{k = n+1}^{n+l} \te_j^1 \psi_k\right),
  \end{multline*}
  where $\te_j^1$ is the linear coefficient in $z$ of $\te_j$.  Finally, by the dilaton equation,
  \begin{equation}
    \label{eq:p1tftloc}
    \omega_{g, n}(\alpha_1, \dotsc, \alpha_n) = \sum_{j \in \{0, \infty\}} (\sigma(j) \lambda)^{g - 1} (1 - \te_j^1)^{-(2g - 2 + n)} \prod_{i = 1}^n \widetilde S_j(\alpha_k, 0).
  \end{equation}

  It is useful to note that the tree series satisfy
  \begin{equation}
    \label{eq:p1limite}
    \epsilon_j|_{y = 0} = \te_j|_{y = 0} = 0,
  \end{equation}
  since a localization tree containing no marking needs to carry a
  positive degree.
  Since for $y = 0$ the quantum idempotents $e_0$ and $e_\infty$
  recover the classical idempotents, and the only contributions to
  $S_0$ or $S_\infty$ in degree zero come from empty trees, we also
  have
  \begin{align}
    \label{eq:p1limitS}
    &\widetilde S_0(e_0, 0)|_{y = 0} = \widetilde S_\infty(e_\infty, 0)|_{y = 0} = 1,\\
    \nonumber &\widetilde S_0(e_\infty, 0)|_{y = 0} = \widetilde S_\infty(e_0, 0)|_{y = 0} = 0.
  \end{align}

  We now compare \eqref{eq:p1tftloc} to explicit expressions of the
  topological field theory.
  First, consider the case $n = 0$.
  Since the norms of the idempotents $e_0$ and $e_\infty$ are given by
  $\lambda^{-1} \phi^{-1/2}$ and $-\lambda^{-1} \phi^{-1/2}$,
  respectively, we have
  \begin{equation*}
    \omega_{g, 0}() = \lambda^{g - 1} \phi^{\frac{g - 1}2} + (-\lambda)^{g - 1} \phi^{\frac{g - 1}2}.
  \end{equation*}
  Comparing this to \eqref{eq:p1tftloc} (note that both hold for any
  $g$) and using \eqref{eq:p1limite}, we see that we must have
  \begin{equation*}
    1 - \te_0^1 = 1 - \te_\infty^1 = \phi^{-\frac 14}.
  \end{equation*}

  The trilinear form $\omega_{0, 3}$ is given by quantum
  multiplication and application of the equivariant Poincar\'{e} pairing.
  It is easy to compute that
  \begin{equation*}
    \omega_{0, 3}(e_{i_1}, e_{i_2}, e_{i_3}) = \delta_{i_1 = i_2 = i_3} \sigma(i_1) \lambda^{-1} \phi^{-\frac 12}.
  \end{equation*}
  With \eqref{eq:p1tftloc}, this gives the equations
  \begin{equation*}
    \begin{aligned}
      \lambda^{-1} \phi^{\frac 14} (\widetilde S_0(e_0, 0))^3 - \lambda^{-1} \phi^{\frac 14} (\widetilde S_\infty(e_0, 0))^3 &= \lambda^{-1} \phi^{-\frac 12} \\
      \lambda^{-1} \phi^{\frac 14} (\widetilde S_0(e_0, 0))^2 (\widetilde S_0(e_\infty, 0)) - \lambda^{-1} \phi^{\frac 14} (\widetilde S_\infty(e_0, 0))^2 (S_\infty e_\infty) &= 0 \\
      \lambda^{-1} \phi^{\frac 14} (\widetilde S_0(e_0, 0)) (\widetilde S_0(e_\infty, 0))^2 - \lambda^{-1} \phi^{\frac 14} (\widetilde S_\infty(e_0, 0)) (\widetilde S_\infty(e_\infty, 0))^2 &= 0 \\
      \lambda^{-1} \phi^{\frac 14} (\widetilde S_0(e_\infty, 0))^3 - \lambda^{-1}
      \phi^{\frac 14} (\widetilde S_\infty(e_\infty, 0))^3 &= -\lambda^{-1}
      \phi^{-\frac 12}.
    \end{aligned}
  \end{equation*}
  
  It is not difficult to see that the only solutions to these
  equations together with \eqref{eq:p1limitS} are
  \begin{equation*}
    \widetilde S_0(e_0, 0) = \widetilde S_\infty(e_\infty, 0) = \phi^{-\frac 14}, \qquad \widetilde S_0(e_\infty, 0) = \widetilde S_\infty(e_0, 0) = 0.
  \end{equation*}
With the identities
  \begin{equation*}
    \one_{\PP^1} = e_0 + e_\infty, \qquad H = \left(\frac\lambda 2 + \frac\lambda 2 \sqrt\phi\right) e_0 + \left(\frac\lambda 2 - \frac\lambda 2 \sqrt\phi\right) e_\infty,
  \end{equation*}
  the lemma easily follows.
\end{proof}

\section{Proof of Theorem \ref{thm:main}}
\label{sec:proof}

We are now ready to turn to the proof of the main theorem.
The basic structure of the proof is to compute the difference between the expression
\begin{equation}
  \label{4}
  \sum_{\beta,d} q^{\beta}y^d p_*\left( \ev_1^*(\alpha) \cap [\PP Z^\epsilon_{g, n, \beta,d}]^{\vir} \right)
\end{equation}
and the expression
\begin{multline}
  \label{5}
 \sum_{d=0}^{\infty} p_*\left(\sum_{k=0}^{\infty}\right. \sum_{\substack{\beta_0, \beta_1, \ldots, \beta_k\\ d_1, \ldots, d_k\\ d_i \leq \beta_i \; \forall i}}\frac{q^{\beta_0}y^{d} b_{(\beta_1, \ldots, \beta_k)*}}{k!} \bigg(\ev_1^*(\alpha) \; \cup \\
  \prod_{i = 1}^k q^{\beta_i}y^{d_i} \ev_{n + i}^*( \widetilde{\mu}_{\beta_i,d_i}^{\epsilon}(-\psi_{n + i})) \cap [\PP Z^{\infty}_{g, n+k, \beta_0,d}]^{\vir} \left.\bigg)\right),
\end{multline}
by localization on the twisted graph space.  Here,
\begin{equation*}
  p\colon \PP Z^\epsilon_{g, n, \beta,d} \rightarrow \M^{\epsilon}_{g, n}(Z, \beta)
\end{equation*}
is the morphism forgetting $L_2, z_1$, and $z_2$ (and stabilizing as
necessary).  The insertion $\alpha$ is an element of $H^*(\PP Z)$, which is isomorphic as a vector space to $\mathcal{H} \otimes H^*(\PP^1)$.  The mirror transformation $\widetilde{\mu}^{\epsilon}$ of the twisted graph space is defined by
\begin{equation*}
\sum_{d=0}^{\infty} y^d \widetilde{\mu}^{\epsilon}_{\beta,d}(z) = \mu^{\epsilon,\text{tw}}_{\beta}(z) \otimes \varphi_0 - \sum_{d=1}^{\beta} y^{d} \frac{\lambda_\infty \cdot [J^{\epsilon}_\tw(q, \frac{\lambda_0}{d})]_{q^{\beta}} }{(\frac{\lambda_\infty}{d} + z) \prod_{i=1}^{d} \prod_{j \in \{0, \infty\}} \frac{i\lambda_j}{d}} \otimes \varphi_{\infty},
\end{equation*}
where
\begin{equation*}
\varphi_0 := \frac{[0]}{\lambda}, \qquad \varphi_{\infty}:= -\frac{[\infty]}{\lambda},
\end{equation*}
for $[0]$ and $[\infty]$ the classes of the $0$- and $\infty$-sections in $\PP Z$.

As we show in Section~\ref{subsec:mainproof} below, whenever the twisted wall-crossing Theorem~\ref{thm:twisted} holds, the localization contributions to \eqref{4} and \eqref{5} agree.  More importantly, though, we show that Theorem \ref{thm:twisted} can be deduced using only the fact that the difference between \eqref{4} and \eqref{5} is a Laurent polynomial in the equivariant parameter.  By Lemma \ref{twuntw}, this implies
Theorem \ref{thm:main}.

\subsection{Multilinear forms}

First, we set up some useful notation and observations.

Define
\begin{equation*}
  \left(\;\right)_{g, n, \beta}^\epsilon\colon \HH[\![\psi]\!]^n \to H_*(\M^{\epsilon}_{g, n}(Z,\beta))  
\end{equation*}
by
\begin{equation*}
  \left(\phi_1, \dotsc, \phi_n\right)_{g, n, \beta}^\epsilon
  = \prod_{i=1}^n \ev_i^*(\phi_i)(\psi_i) \cap [\M^{\epsilon}_{g, n}(Z,\beta)]^{\vir}_{\tw},  
\end{equation*}
define
\begin{equation*}
  \left(\;\right)_{g, n, \beta}^{\infty \to \epsilon}\colon \HH[\![\psi]\!]^n \to H_*(\M^{\epsilon}_{g, n}(Z,\beta))  
\end{equation*}
by
\begin{multline*}
  \sum_\beta q^\beta \left(\phi_1, \dotsc, \phi_n\right)_{g, n,
    \beta}^{\infty \to \epsilon}
  = \sum_{k, \beta_0, \beta_1, \ldots, \beta_k} \frac{q^{\beta_0}}{k!} b_{\bbeta*} c_* \Bigg(\prod_{i=1}^n \ev_i^*(\phi_i)(\psi_i) \\
  \cup \prod_{i = 1}^k q^{\beta_i} \ev_{n +
    i}^*(\mu_{\beta_i}^{\epsilon, \tw}(-\psi_{n + i})) \cap [\M^{\infty}_{g,
    n+k}(Z,\beta_0)]^{\vir}_{\tw} \Bigg),
\end{multline*}
and define
\begin{equation*}
  \left(\;\right)_{g, n, \beta}^{\text{WC}}\colon \HH[\![\psi]\!]^n \to H_*(\M^{\epsilon}_{g, n}(Z,\beta))  
\end{equation*}
by
\begin{equation*}
  \left(\phi_1, \dotsc, \phi_n\right)_{g, n, \beta}^{\text{WC}} = \left(\phi_1, \dotsc, \phi_n\right)_{g, n, \beta}^\epsilon - \left(\phi_1, \dotsc, \phi_n\right)_{g, n, \beta}^{\infty \to \epsilon}.  
\end{equation*}

Then Theorem~\ref{thm:twisted} is equivalent to the statement that the form $\left(\;\right)_{g, n, \beta}^{\text{WC}}$ vanishes when all insertions lie in $\HH \subset \HH[\![\psi]\!]$.  In fact, we need only verify this vanishing when $\phi_i = \one$ for each $i$:

\begin{lemma}
  \label{lem:red}
  The equation
  \begin{equation*}
    \left(\phi_1, \dotsc, \phi_n\right)_{g, n, \beta}^{\text{WC}} = 0
  \end{equation*}
  holds for any $\phi_1, \dotsc, \phi_n \in \HH$ if and only if
  \begin{equation}
  \label{eq:1}
    \left(\one, \dotsc, \one\right)_{g, n, \beta}^{\text{WC}} = 0.
  \end{equation}
  Thus, Theorem~\ref{thm:main} is equivalent to \eqref{eq:1}.
\end{lemma}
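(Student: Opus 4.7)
The forward implication is immediate, so the plan focuses on the converse.  The idea is to exhibit $\left(\phi_1, \dotsc, \phi_n\right)^{\text{WC}}_{g, n, \beta}$ as a cap product of $\prod_{i=1}^n \ev_i^* \phi_i$ with the ``base case'' $\left(\one, \dotsc, \one\right)^{\text{WC}}_{g, n, \beta}$; the assumed vanishing of the latter will then force vanishing of the former.

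First, for the $\epsilon$-stable form, I would observe that since each $\phi_i \in \H$ has no $\psi$-dependence, the identity
\begin{equation*}
  \left(\phi_1, \dotsc, \phi_n\right)^{\epsilon}_{g, n, \beta} = \prod_{i=1}^n \ev_i^* \phi_i \cap \left(\one, \dotsc, \one\right)^{\epsilon}_{g, n, \beta}
\end{equation*}
is immediate from the definition.  For the $\infty$-to-$\epsilon$ form, the same identity is less transparent because of the pushforwards by $b_{\bbeta}$ and $c$.  The key observation I would use is that both morphisms preserve the first $n$ marked points, so that $\ev_i \circ b_{\bbeta} \circ c = \ev_i$ on $\M^{\infty}_{g, n+k}(Z, \beta_0)$ for $1 \leq i \leq n$.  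The projection formula then permits moving the factor $\prod_{i=1}^n \ev_i^* \phi_i$ outside of the pushforward $b_{\bbeta*} c_*$, yielding the analogous factorization for $\left(\phi_1, \dotsc, \phi_n\right)^{\infty \to \epsilon}_{g, n, \beta}$.

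Subtracting the two factorizations then gives
\begin{equation*}
  \left(\phi_1, \dotsc, \phi_n\right)^{\text{WC}}_{g, n, \beta} = \prod_{i=1}^n \ev_i^* \phi_i \cap \left(\one, \dotsc, \one\right)^{\text{WC}}_{g, n, \beta},
\end{equation*}
from which the converse implication follows.  The final sentence of the lemma is then a formal consequence: the paragraph preceding the lemma identifies the vanishing of $\left(\;\right)^{\text{WC}}_{g, n, \beta}$ on all of $\H^n$ with Theorem~\ref{thm:twisted}, and Lemma~\ref{twuntw} passes from the twisted wall-crossing statement to Theorem~\ref{thm:main}.

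I do not anticipate any serious obstacle.  The only nontrivial input is the compatibility $\ev_i \circ b_{\bbeta} \circ c = \ev_i$ for $i \leq n$, which is immediate from the definitions: the contraction $c$ only affects rational tails, while $b_{\bbeta}$ only modifies the last $k$ marked points, so neither morphism disturbs the evaluation at the first $n$ marked points.
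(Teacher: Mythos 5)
Your proposal is correct and takes essentially the same route as the paper: the paper's one-line proof appeals to compatibility of the evaluation maps (and $\psi$-classes) with $b_{\bbeta}$, which you simply spell out, additionally noting the compatibility with the contraction $c$. Since you restrict $\phi_i$ to $\H$ (no $\psi$-dependence), the $\psi$-class compatibility the paper mentions is not needed, and the projection-formula factorization you write down is exactly what the paper's one-sentence argument implicitly performs.
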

\begin{proof}
  This follows from the fact that the evaluation maps at the first $n$
  marked points commute with $b_{\bbeta}$, as well as the fact that
  the $i$th cotangent line bundle is preserved under pullback by
  $b_{\bbeta}$.
\end{proof}

\subsection{Proof of Theorem~\ref{thm:twisted}}
\label{subsec:mainproof}

We prove the vanishing of $\left(\one, \dotsc, \one\right)_{g, n, \beta}^{\text{WC}}$, and thus Theorem~\ref{thm:twisted}, by a series of inductions.

The first induction is on the degree $\beta$.  When $\beta = 0$, there is nothing to prove, since
$\M^{\epsilon}_{g, n}(Z, 0) = \M^\infty_{g, n}(Z, 0)$.  Fix a degree $\beta$, then, and suppose that $\left(\one, \dotsc, \one\right)_{g, n, \beta'}^{\text{WC}} = 0$ has been shown for all $\beta' < \beta$ (and all $g$ and $n$).

The next induction is on the genus $g$.  When $g = 0$, Theorem~\ref{thm:twisted} was proven by Ciocan-Fontanine and Kim in \cite{CFKZero}.  (They state their result only on the level of invariants, but it can be readily generalized to the level of virtual cycles.  See also \cite{CladerRoss} for the analogous genus-zero virtual cycle result in the Landau--Ginzburg chamber.)  Thus, we fix a genus $g$ and suppose that $\left(\one, \dotsc, \one\right)_{g', n, \beta}^{\text{WC}} = 0$ has been shown for all $g' < g$ (and all $n \geq 1$).

Denote by
\begin{equation*}
  D_{\beta}(\lambda,y) \in H_*(\M^{\epsilon}_{g, n}(Z, \beta))[\lambda,
  \lambda^{-1}][\![y]\!]
\end{equation*}
the coefficient of $q^{\beta}$ in the difference between \eqref{4} and \eqref{5}.  Both \eqref{4} and \eqref{5} can be computed by localization on the
respective twisted graph spaces, and the graphs indexing fixed loci appearing in \eqref{4} and
the graphs indexing fixed loci appearing in \eqref{5} differ in only
two ways: the latter have $k$ additional legs, and they have smaller total degree $\beta_0$.  Recall also that localization graphs need to satisfy $d(e) > \beta(v)$ for each genus-zero, valence-one vertex $v$ with unique incident edge $e$ (see Remark~\ref{rem:dbeta}).  If a graph does not necessarily satisfy this degree condition, but it satisfies all other conditions of a localization graph, then we refer to it as a ``fake localization graph".

For each choice of $k$, each choice of a contributing localization graph $\Gamma_k$ to \eqref{5}, and each choice of $\vec{\beta} = (\beta_1, \ldots, \beta_k)$ and $\vec{d} = (d_1, \ldots, d_k)$, define a fake localization graph $\Gamma$ for \eqref{4} by applying the following operations:
\begin{itemize}
\item Remove all additional legs at vertices $v$ with $j(v) = 0$ and add their $\beta$-degree to the incident vertex.
\item Replace each additional leg at a vertex $v$ with $j(v) = \infty$ by a new edge connected to a new genus-zero, valence-one vertex, where the degree of the new edge is prescribed by $\vec{d}$ and the $\beta$-degree of the former extra leg is put on the new vertex.
\end{itemize}
We refer to the sum of all contributions of localization graphs $\Gamma_k$ corresponding to the fake localization graph $\Gamma$ as ``the contribution of $\Gamma$ to \eqref{5}".  In fact, if $v$ is a genus-zero, valence-one vertex of a fake localization graph $\Gamma$ with unique incident edge $e$, then the definition of $\widetilde{\mu}^{\epsilon}$ is exactly chosen such that if $d(e) \leq \beta(v)$, then the local contribution of $v$ to the localization formula for \eqref{5} vanishes.  Thus, since fake localization graphs do not appear in \eqref{4}, we can express both \eqref{4} and \eqref{5}---and hence $D_{\beta}(\lambda,y)$---as a sum over the honest localization graphs for \eqref{4}.

The contribution to $D_{\beta}(\lambda,y)$ of many such graphs can immediately be shown to vanish.  Indeed, suppose that $\Gamma$ is a localization graph in which there are at least two vertices of positive degree. For any vertex $v$ such that $j(v) = \infty$, the contributions of $v$
to \eqref{4} and \eqref{5} are identical, and the contributions from
each edge are also the same. 
Thus, the only possibly non-identical contributions come from vertices
$v$ with $j(v) = 0$.

Let us first consider vertices $v$ of $\Gamma$ with $j(v) = 0$ that are unstable of genus zero and valence one, with unique incident edge $e$ of degree $d(e) > \beta(v)$.  The contribution of such a vertex to the localization formula for
\eqref{4} is given by the coefficient of $q^{\beta(v)}$ in
\begin{equation*}
  J^{\epsilon,\tw}(q, z)\bigg|_{z = \frac{\lambda_0}{d(e)}},
\end{equation*}
where we again use the notation $\lambda_0 := \lambda - H$.
The corresponding contribution to \eqref{5} (after forgetting about
the additional markings at $v$) is given by
\begin{multline*}
  \delta_{\beta(v), 0} \frac{\lambda_0}{d(e)} + \mu_{\beta(v)}^{\epsilon, \tw}\left(\frac{\lambda_0}{d(e)}\right) \\
  + \sum_{\substack{k \ge 1, \beta_0 + \dotsb + \beta_k = \beta(v) \\ k + \beta_0 > 1}} \ev_{1, *} \left(\frac{\prod\limits_{i = 1}^k \ev_{i + 1}^* \left(\mu_{\beta_i}^{\epsilon, \tw}(-\psi_{i + 1})\right)}{\left(\frac{\lambda_0}{d(e)} - \psi_1\right) k!} \cap [\M^\infty_{0, 1 + k}(Z, \beta_0)]^{\vir}_\tw\right) \\
  = J^{\infty,\tw}\left(q, \sum_\beta q^\beta \mu_\beta^{\epsilon, \tw}(-z), z\right)\bigg|_{z = \frac{\lambda_0}{d(e)}},
\end{multline*}
where the first two summands come from the cases where the vertex corresponding to $v$ in \eqref{5} is
unstable and the last summand comes from the case where this vertex is
stable.  By the $J$-function wall-crossing \eqref{JWC}, these contributions
to \eqref{4} and \eqref{5} are identical.

For stable vertices with $j(v) = 0$, on the other hand, the twisted wall-crossing holds in degree $\beta(v) < \beta$ by induction.  This says precisely that the contributions to \eqref{4} and \eqref{5} from such vertices also agree.

Thus,\footnote{Here, we are using the splitting property satisfied
  by the quasimap virtual fundamental class (see
  \cite[Section~2.3.3]{CFKHigher}).} we have shown that the
contributions to \eqref{4} and \eqref{5} agree whenever there are two
vertices of positive degree.  Similarly, our induction on the genus shows that the contributions to \eqref{4} and \eqref{5} agree whenever there are two vertices of positive genus.  Thus, we can express $D_{\beta}(\lambda,y)$ as a sum over contributions from graphs
$\Gamma$ in which there is a vertex $v_0$ with
$\beta(v_0) = \beta$ and $g(v_0) = v$, and all other vertices have degree and genus zero.
(Note that in the contributions to \eqref{5} from such a graph, the
last $k$ markings must all lie at $v_0$.)  Similarly to the situation in Section~\ref{P1}, there are $m$ trees
emanating from $v_0$ on which at least one of the markings
$q_1, \ldots, q_n$ lies, and $l$ unmarked trees.

We now begin the third induction, on the number $n$ of marked points.  Suppose that $\left(\one, \dotsc, \one\right)_{g, n', \beta}^{\text{WC}} = 0$ holds for all $n' < n$.  Then the contribution of $\Gamma$ to $D_{\beta}(\lambda,y)$ vanishes unless each of the markings $q_1, \ldots, q_n$ lies on a separate (possibly empty) tree emanating from $v_0$.

Note that since $\beta(v) = 0$ for all vertices of the trees, the
localization contribution of such trees is identical to those
discussed in Section~\ref{P1}, except that we should replace $\lambda$
by $\lambda_0$.
Thus, if
\begin{equation*}
  \pi_l\colon \M^{\epsilon}_{g, n + l}(Z, \beta) \rightarrow \M^{\epsilon}_{g, n}(Z, \beta)
\end{equation*}
denotes the forgetful map, $D_{\beta}(\lambda, y)$ is expressed as
\begin{equation*}
  D_{\beta}(\lambda, y) = \sum_{l=0}^{\infty} \frac{1}{l!} \pi_{l*}\left(g_1(\psi), \dotsc, g_n(\psi), \epsilon^{\lambda_0}(\psi), \dotsc, \epsilon^{\lambda_0}(\psi) \right)_{g, n + l, \beta}^{\text{WC}}
\end{equation*}
where
\begin{equation*}
  g_i(z) =
  \begin{cases}
    S_0^{\lambda_0}(\alpha, z) & \text{if }i = 1, \\
    S_0^{\lambda_0}(\one, z) & \text{if }i > 1, \\
  \end{cases}
\end{equation*}
and $\epsilon^{\lambda_0}(z)$ (respectively
$S_0^{\lambda_0}(\phi, z)$) is obtained from $\epsilon_0(z)$
(respectively $S_0(\phi, z)$) by replacing $\lambda$ by $\lambda_0$.

As in Section~\ref{P1}, we rewrite this by expressing the
$\psi$-classes at the first $n$ markings in terms of the
$\psi$-classes $\overline{\psi}_j$ pulled back under
$\pi_l$.
Note that the classes $\psi_j$ and $\overline{\psi_j}$ differ exactly
on the locus of curves with a rational component of degree $\beta = 0$
containing marking $j$ and some of the markings
$n + 1, \dotsc, n + l$.
Hence, expressing the $\psi$-classes in terms of the pullback
$\psi$-classes formally works in the same way as for $\M_{g, n}$, and
one obtains
\begin{equation*}
  D_{\beta}(\lambda, y) = \sum_{l = 0}^\infty \frac 1{l!}
  \pi_{l*}\left(\tg_1(\overline\psi), \dotsc, \tg_n(\overline\psi), \epsilon^{\lambda_0}(\psi), \dotsc, \epsilon^{\lambda_0}(\psi)\right)_{g, n + l, \beta}^{\text{WC}},
\end{equation*}
where $\tg_i(z)$ is defined in the same way as $g_i(z)$ but with $S_0$
replaced by $\widetilde S_0$, and $\overline\psi^k$ is a symbol that
should be replaced by the pullback $\psi$-class $\overline{\psi}_j^k$
in position $j$.

We define
\begin{equation*}
  U_{l, k} \in \HH^{\otimes l}[\lambda_0, \lambda_0^{-1}][\![y, \psi]\!]
\end{equation*}
such that
\begin{equation*}
  \sum_{k = -l}^\infty U_{l, k}
  = \sum_{l = 0}^\infty \frac 1{l!} \epsilon^{\lambda_0}(\psi) \otimes \dotsb \otimes \epsilon^{\lambda_0}(\psi) \in \HH^{\otimes l}[\lambda_0, \lambda_0^{-1}][\![y, \psi]\!],
\end{equation*}
and such that $U_{l, k}$ is homogeneous of degree $k + l$ for the
grading where $\deg(y) = \deg(\lambda) = 0$ and $\deg(H) = \deg(\overline\psi) = 1$.
So, slightly abusing notation, we can write
\begin{equation}
  \label{l'}
  D_{\beta}(\lambda, y) = \sum_{l = 0}^\infty \sum_{k = -l}^\infty
  \pi_{l*}\left(\tg_1(\overline\psi), \dotsc, \tg_n(\overline\psi), U_{l, k}\right)_{g, n + l, \beta}^{\text{WC}}.
\end{equation}

We now claim that, for any non-positive $k$, the wall-crossing formula
\begin{equation}
  \label{eq:k}
 \sum_{l = 0}^\infty \pi_{l*} \left(\one, \dotsc, \one, U_{l, k}\right)_{g, n + l, \beta}^{\text{WC}} = 0
\end{equation}
holds.

The proof of \eqref{eq:k} is by induction on $k$, using that \eqref{l'} is a
Laurent polynomial in $\lambda$ for any choice of the insertion
$\alpha$.
The claim is trivially true for $k$ sufficiently small, since the left-hand side of \eqref{eq:k} lives in
\begin{equation*}
  H_{2(\vdim(\M^\epsilon_{g, n}(Z, \beta)) - k)}(\M^\epsilon_{g, n}(Z, \beta)),
\end{equation*}
which is trivial when $\vdim(\M^\epsilon_{g, n}(Z, \beta)) - k$ is
bigger than the dimension of all components of
$\M^\epsilon_{g, n}(Z, \beta)$.

Now, suppose the claim holds for all $k < k_0$.
This implies that all summands of \eqref{l'} with $k < k_0$ vanish, since $\one$ differs from $\tg_j(\overline\psi_j)$ only by a factor
pulled back from $\M^\epsilon_{g, n}(Z, \beta)$.
It follows that the highest possible dimension of a summand is
\begin{equation}
  \label{eq:vdim}
  \dim\M^\epsilon_{g, n}(Z, \beta) - k_0,
\end{equation}
and this dimension is achieved only in the $k = k_0$ term from taking
the coefficient of $z^0 H^0$ for each $\tg_j(z)$.
It follows that
\begin{equation}
  \label{eq:topdim}
  \sum_{l = 0}^\infty
  \pi_{l*}\left(\widetilde S_0(\alpha, 0), \widetilde S_0(\one, 0), \dotsc, \widetilde S_0(\one, 0), U_{l, k_0}\right)_{g, n + l, \beta}^{\text{WC}}
\end{equation}
is a Laurent polynomial in $\lambda$.

The crucial point, now, is that the above is true for either choice of
the insertion $\alpha \in \{\one, H\}$.
By \eqref{fi0}, changing $\alpha$ from $\one$ to $H$ causes
$\tg_{1}(0)$ to change by a factor of
\begin{equation}
  \label{difference}
  \frac{\lambda}2 + \frac{\lambda}2 \sqrt{1+ \frac{4y}{\lambda^2}}.
\end{equation}
Since \eqref{difference} has infinitely many negative powers when
expanded as a Laurent series in $\lambda$, this is only possible if
\eqref{eq:topdim} is identically zero.
Dividing \eqref{eq:topdim} for $\alpha = \one$ by $\phi^{-n/4}$ (see
Lemma~\ref{lem:P1}) completes the induction step for the proof of
\eqref{eq:k}.

\begin{remark}
  \label{rem:mp}
  Note that the assumption of one marked point is necessary at this
  stage of the proof, in order to vary the insertion $\alpha$.
  One might hope to remove this assumption, but in order for the
  unmarked wall-crossing to appear in the localization formula for the
  twisted graph space $\PP Z^{\epsilon}_{g, n, \beta, d}$, we would
  need a localization graph consisting of exactly one vertex $v$,
  which has $j(v) = 0$.
  Such a graph exists only when $d = 0$, but in this case the virtual
  dimension of the twisted graph space is $g + \beta - 1$ less than
  the virtual dimension of $\M^{\epsilon}_{g, n}(Z, \beta)$, so when
  $g > 0$ or $\beta > 0$ we cannot expect to obtain any relations from
  localization.
\end{remark}

Since $\epsilon^{\lambda_0}(\psi)|_{y = 0} = 0$, the $y^0$-coefficient
of \eqref{eq:k} for $k = 0$ gives exactly the wall-crossing formula. This completes the proof of Theorem~\ref{thm:twisted}.

\bibliographystyle{amsplain}

\providecommand{\bysame}{\leavevmode\hbox to3em{\hrulefill}\thinspace}
\providecommand{\MR}{\relax\ifhmode\unskip\space\fi MR }
\providecommand{\MRhref}[2]{%
  \href{http://www.ams.org/mathscinet-getitem?mr=#1}{#2}
}
\providecommand{\href}[2]{#2}

\end{document}